\documentclass{amsart}
\usepackage{amsmath}
\usepackage{amssymb}
\usepackage{hyperref}
\usepackage{enumitem}
\usepackage{mathrsfs}
\usepackage{rank-2-roots}
\usepackage{tikz}

\numberwithin{equation}{section}

\newcommand{\R}[0]{\mathbb{R}}

\newtheorem{thm}{Theorem}[section]

\newtheorem{prop}[thm]{Proposition}
\newtheorem{lem}[thm]{Lemma}
\newtheorem{conj}[thm]{Conjecture}

\newtheorem*{defn*}{Definition}

\newtheorem{rem}[thm]{Remark}

\numberwithin{equation}{section}

\usepackage{etoolbox}

\makeatletter
\patchcmd{\@settitle}{\uppercasenonmath\@title}{}{}{}
\patchcmd{\@setauthors}{\MakeUppercase}{}{}{}
\patchcmd{\section}{\scshape}{}{}{}
\makeatother

\makeatletter
\@namedef{subjclassname@2020}{\textup{2020} Mathematics Subject Classification}
\makeatother

\title[Pointwise character bounds for $\mathrm{SU}(3)$]{Pointwise character bounds for $\mathrm{SU}(3)$}

\author[Y. Zhang]{Yunfeng Zhang}
\address{Department of Mathematical Sciences,
University of Cincinnati, 
Cincinnati, OH 45221}
\email{yunfengzhang108@gmail.com}

\begin{document}

\begin{abstract}
We present a basic pointwise bound for the irreducible characters of $\mathrm{SU}(3)$ and, as an application, derive new $L^p$ bounds for these characters. Our approach is based on the descent of characters to singular sets and the cancellation in this formula. 
\end{abstract}

\maketitle

\section{Introduction}
Let $\mathrm{SU}(n)$ denote the special unitary group of degree $n$. 
We have the following basic pointwise bound for the irreducible characters of $\mathrm{SU}(2)$:
\begin{align}\label{eq: rank one}
    \left|\frac{\sin ((n+1)\theta)}{\sin \theta}\right|\leq\min\{n+1,|\sin\theta|^{-1}\},\qquad n\in\mathbb{Z}_{\geq 0}, \ \theta\in\R.
\end{align}
The goal of this paper is to prove an analogous pointwise bound for the irreducible characters of $\mathrm{SU}(3)$ and, as an application, derive new $L^p$ bounds for these characters.

Bounds for characters, both pointwise and in $L^p$ norms, are central to harmonic analysis on compact groups. For example, they arise in the study of summability of Fourier series \cite{Cle76, ST78, GST82, Giu83}, lacunary sets \cite{Cec72, Doo79, Giu83}, and the singularity of central measures \cite{Har98, HWY00}. As characters form a distinguished class of spherical functions on compact symmetric spaces, their analytic behavior also serves as a model for more general spherical functions \cite{Cle88, Mar16}. Finally, since characters are eigenfunctions of the Laplace operator on compact Lie groups, their $L^p$ bounds shed light on the general problem of Laplacian eigenfunction bounds on manifolds \cite{Sog88, Zel17}.

Throughout the paper, let $G$ denote $\mathrm{SU}(3)$. Let $T$ be a maximal torus of $G$. Let $\mathfrak{t}$ be the Lie algebra of $T$, and $\mathfrak{t}^*$ be the real dual of $\mathfrak{t}$. Let $\Phi^+\subset i\mathfrak{t}^*$ be a positive root system for $G$. 
For example, we may pick $\Phi^+=\{\alpha_1,\alpha_2,-\alpha_0\}$; see Figure \ref{fig:roots}. Let $\Lambda^+\subset i\mathfrak{t}^*$ be the set of dominant weights. For $\mu\in\Lambda^+$, let $\chi_\mu:G\to\mathbb{C}$ be the corresponding irreducible character of $G$. 
Let
$$\chi(\mu,H):=\chi_\mu(\exp H),\qquad \mu\in\Lambda^+,\ H\in\mathfrak{t}.$$
Let $\rho$ be the half sum of positive roots. We state our main theorem. 


\begin{thm}\label{thm: main}
For $\mu\in\Lambda^+$ and $H\in\mathfrak{t}$, there exists a uniform constant $C>0$ such that  
\begin{align}\label{eq: first}
    |\chi(\mu, H)|\leq C\sum_{s\in W}\prod_{\alpha\in\Phi^+}\min\left\{|\langle s(\mu+\rho),\alpha\rangle|, \left|\sin\left(\frac{\alpha( H)}{2 i}\right)\right|^{-1}\right\}.
\end{align}
\end{thm}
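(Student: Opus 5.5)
Write $\lambda=\mu+\rho$ and use the Weyl character formula
$$\chi(\mu,H)=\frac{\sum_{s\in W}\varepsilon(s)e^{\langle s\lambda,H\rangle}}{\prod_{\alpha\in\Phi^+}\bigl(e^{\alpha(H)/2}-e^{-\alpha(H)/2}\bigr)}.$$
For $\alpha\in\Phi^+$ set $\theta_\alpha=\alpha(H)/(2i)$, so the denominator has modulus $\prod_\alpha 2|\sin\theta_\alpha|$. Two extreme regimes are immediate. If all $|\sin\theta_\alpha|$ are large compared with $1/|\langle\lambda,\alpha\rangle|$, the trivial bound $|W|$ on the numerator gives $|\chi|\lesssim\prod_\alpha|\sin\theta_\alpha|^{-1}$. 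Conversely, the Weyl dimension formula together with $|\chi(\mu,H)|\le\chi(\mu,0)$ gives $|\chi|\le\prod_\alpha\langle\lambda,\alpha\rangle/\langle\rho,\alpha\rangle$. Since $|\langle s\lambda,\alpha\rangle|$ runs over the same multiset $\{\langle\lambda,\alpha\rangle\}$ as $\alpha$ varies, the right-hand side of \eqref{eq: first} dominates a single product $\prod_\alpha\min\{|\langle s\lambda,\alpha\rangle|,|\sin\theta_\alpha|^{-1}\}$ for each $s$. So it suffices to handle the mixed regimes, which I will do by \emph{descent to the singular set}.

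\textbf{Reductions.} By $W$-invariance of both sides and periodicity in $H$ (the torus $T$ is compact, and $|\sin\theta_\alpha|$ is invariant under the affine Weyl group), I may assume $H$ lies in a fundamental alcove. Near each vertex of the alcove, $\exp H$ is close to a central element, and multiplying by the center only changes $\chi$ by a phase. Hence I may take $H$ near $0$ or near an edge or face of the alcove, and the quantities $|\sin\theta_\alpha|$ become comparable to the distances $|\theta_\alpha|$ modulo $\pi$.

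\textbf{Descent.} Let $S\subset\Phi^+$ be the set of roots with $|\sin\theta_\alpha|\lesssim1/\langle\lambda,\alpha\rangle$, i.e.\ the small ones. Write $H=H_0+Y$, where $H_0$ is the projection onto the face singular for the roots in $S$ (so $\alpha(H_0)=0$ for $\alpha\in S$, modulo the center) and $Y$ is small. The descent formula expresses the character in terms of the Levi subgroup $L$ with root system generated by $S$:
$$\chi(\mu,H)=\sum_{w\in W/W_L}\frac{\varepsilon(w)\,e^{\langle w\lambda,H_0\rangle}\,\chi_L(w\lambda,Y)}{\prod_{\alpha\in\Phi^+\setminus\Phi_L}\bigl(e^{\alpha(H)/2}-e^{-\alpha(H)/2}\bigr)},$$
where $\chi_L$ is the corresponding Weyl quotient for $L$. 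For $\mathrm{SU}(3)$, $L$ is $T$, an $\mathrm{SU}(2)$, or $G$. In the $\mathrm{SU}(2)$ case, \eqref{eq: rank one} bounds $\chi_L(w\lambda,Y)$ by $\min\{|\langle w\lambda,\beta\rangle|,|\sin\theta_\beta|^{-1}\}$ for the single root $\beta\in S$. The remaining two denominators are each $\gtrsim|\sin\theta_\alpha|$, and since those roots are not in $S$, the factor $|\sin\theta_\alpha|^{-1}$ is the smaller term of the minimum. This produces exactly one summand of \eqref{eq: first}.

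\textbf{Main obstacle.} The difficulty lies in the intermediate regime where two roots are mutually close to singular at different scales. Typically $\theta_{\alpha_1}$ is tiny while $\theta_{\alpha_2}$ is small but not tiny, and $\theta_{\alpha_0}$ is then constrained by $\alpha_0=-\alpha_1-\alpha_2$. Here the naive denominators $\sin\theta_\alpha$ for $\alpha\notin\Phi_L$ can be nearly as small as the $L$-scale, so individual terms in the descent sum are too large and the bound must come from cancellation between pairs $w$, $w s_\beta$ in $W/W_L$. I plan to treat this by a second, iterated descent. I will group the terms of the descent sum into differences of the form $(f(x+h)-f(x))/h$ and bound them with the mean value theorem, or equivalently with a rank-one divided-difference estimate of the same type as \eqref{eq: rank one}. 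This trades one factor $|\sin\theta_\alpha|^{-1}$ for $|\langle s\lambda,\alpha\rangle|$ precisely when the former is larger, recovering the minimum. The case analysis over which roots are small and the relative sizes of $\langle\lambda,\alpha_1\rangle$ and $\langle\lambda,\alpha_2\rangle$ is finite for $\mathrm{SU}(3)$. Verifying that every case closes with a uniform constant $C$ is the step I expect to require the most care.
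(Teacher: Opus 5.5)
Your framework (Weyl character formula, descent to a Levi subgroup, cancellation among grouped descent terms) matches the paper's. However, the step you treat as routine fails exactly where the real work lies, and the obstacle you single out is not the real one.

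\textbf{The $\mathrm{SU}(2)$ step fails when $\lambda=\mu+\rho$ is near a wall.} Your set $S$ collects roots with $|\sin\theta_\alpha|\lesssim 1/\langle\lambda,\alpha\rangle$. So a root can enter $S$ because $\lambda$, not $H$, is close to its wall. Then $Y$ is not small, and the $\mathrm{SU}(2)$ descent does not produce one summand. Concretely, take $\langle\lambda,\alpha_1\rangle\approx 1$, $|\lambda|\approx N$, and $H$ near $0$ with $\langle\alpha_1,H\rangle\approx\langle\alpha_2,H\rangle\approx\epsilon$, where $N^{-1}\ll\epsilon\ll 1$. Then $S=\{\alpha_1\}$, and every summand of \eqref{eq: first} is $\approx\epsilon^{-2}$, since each $s$ places the value $\langle\lambda,\alpha_1\rangle$ against some root. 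But the two descent terms with $|\langle w\lambda,\alpha_1\rangle|\approx N$ are bounded by your argument only by $\min\{N,\epsilon^{-1}\}\epsilon^{-2}=\epsilon^{-3}$, and for generic $H$ they really are that large.

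The slip is in your comparison for $\alpha\notin S$: you compared $|\sin\theta_\alpha|^{-1}$ with $\langle\lambda,\alpha\rangle$ rather than with $|\langle w\lambda,\alpha\rangle|$. For $w\neq e$, one of the latter equals $\langle\lambda,\alpha_1\rangle$, which is small.

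A second failure: if $\|\langle\alpha_0,H\rangle\|\le N^{-1}\ll\|\langle\alpha_2,H\rangle\|$ and $\langle\lambda,\alpha_1\rangle$ is small, then $S\supset\{\alpha_0,\alpha_1\}$. So $L=G$, and you are left with the dimension bound $\approx\langle\lambda,\alpha_1\rangle N^2$. The target is only $\approx\langle\lambda,\alpha_1\rangle N\|\langle\alpha_2,H\rangle\|^{-1}$.

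\textbf{Your ``main obstacle'' needs no cancellation.} Take $\mu$ regular and $|\sin\theta_{\alpha_1}|=\delta\ll|\sin\theta_{\alpha_2}|\approx|\sin\theta_{\alpha_0}|\approx\epsilon$. Either $\epsilon\lesssim N^{-1}$, and the dimension bound suffices, or the $\alpha_1$-descent with the rank-one bound gives $\min\{N,\delta^{-1}\}\epsilon^{-2}$, which is the $s=e$ summand.

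\textbf{The missing idea is \emph{spectral} cancellation.} The paper first uses the two $W$-symmetries so that $\alpha_1$ has the smallest $|\langle\mu,\alpha\rangle|$ and the largest $\|\langle\alpha,H\rangle\|$. It then needs cancellation only when $|\langle\mu,\alpha_1\rangle|\le\|\langle\alpha_1,H\rangle\|^{-1}$.
\begin{itemize}
\item If $\|\langle\alpha_0,H\rangle\|\le|\mu|^{-1}\le\|\langle\alpha_2,H\rangle\|$, it descends along $\alpha_0$ and pairs $e$ with $s_{\alpha_1}$.
\item If all $\|\langle\alpha,H\rangle\|\ge|\mu|^{-1}$, it descends along $\alpha_1$ and pairs $s_{\alpha_2}$ with $s_{\alpha_2}s_{\alpha_1}$.
\end{itemize}
In each pair the two weights differ by $\frac{2\langle\mu,\alpha_1\rangle}{|\alpha_1|^2}$ times a root. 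The paper bounds the difference of the phases $e^{i\langle\cdot,H_j\rangle}$ and the difference of the rank-one characters separately, and each gains a factor proportional to $|\langle\mu,\alpha_1\rangle|$. This requires inserting a sign $(-1)^{\delta}$, fixed by the parities of $2\langle\mu,\alpha_1\rangle/|\alpha_1|^2$ and of the winding number of $\langle\alpha_j,H\rangle$.

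Your pairing $w\leftrightarrow ws_\beta$ has the right shape when $\beta$ is the spectrally small root. But the estimate you propose, a spatial divided difference $(f(x+h)-f(x))/h$, only reproduces the rank-one bound you have already used. What is needed is a finite difference in the spectral variable.
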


For $\mu\in\Lambda^+$, let $d_\mu=\chi(\mu,0)$ be the dimension of the associated irreducible representation. Using the Weyl dimension formula \cite{Wey39} 
\begin{align}\label{eq: dimension}
d_\mu=\prod_{\alpha\in\Phi^+}\frac{\langle  \mu+\rho,\alpha\rangle}{\langle  \rho,\alpha\rangle},
\end{align}
we may rewrite the inequality \eqref{eq: first} as 
\begin{align}\label{eq: second}
    |\chi(\mu, H)|\leq C d_\mu\sum_{s\in W}\prod_{\alpha\in\Phi^+}\left(1+ \left|\langle s(\mu+\rho),\alpha\rangle\sin\left(\frac{\alpha( H)}{2 i}\right)\right|\right)^{-1}.
\end{align}

The above inequality suggests a natural extension to spherical functions on symmetric spaces of compact type: 

\begin{conj}\label{conj}
Let $\psi(\mu,H)$ be the spherical function on an irreducible symmetric space $X$ of compact type, with spectral parameter $\mu\in \Lambda^+\subset i\mathfrak{t}^*$ and spatial parameter $H\in \mathfrak{t}$, normalized such that $\psi(\mu,0)=1$. 
Let $W$ denote the Weyl group, and let $\Phi^+$ be the positive restricted root system for $X$, with the root multiplicity function denoted by $\Phi^+\ni\alpha\mapsto m(\alpha)\in\mathbb{Z}_{\geq 1}$.  
Then 
\begin{align} \label{eq: conj}
    |\psi(\mu, H)|\leq C\sum_{s\in W}\prod_{\alpha\in\Phi^+}\left(1+ \left|\langle s\mu,\alpha\rangle\sin\left(i\alpha( H)\right)\right|\right)^{-\frac{m(\alpha)}{2}}
\end{align}
holds for some uniform constant $C$. 
\end{conj}

The bound \eqref{eq: conj} is naturally expected from the heuristics of stationary phase. Write the symmetric space $X$ as the standard quotient $U/K$, where  
$U$ is the maximal compact subgroup of a complex semisimple Lie group $G_\mathbb{C}$. 
Clerc gave an integral formula for the spherical function (Theorem 2.2 of \cite{Cle88}), namely,
$$\psi(\mu,H)= \int_{K_{\exp H}} e^{\mu(\mathcal{H}((\exp H) k))}\ dk,$$
where $\mathcal{H}$ denotes the complexified Iwasawa projection which is multi-valued and holomorphic on an open dense subset $\omega$ of $G_\mathbb{C}$, 
and $K_{\exp H}=\{k\in K: (\exp H) k\in\omega\}$ is an open dense subset of $K$. 
Assuming that both the spectral parameter $\mu$ and and the spatial parameter $H$ are regular, 
Clerc found that the critical points of the phase function $k\mapsto \mu(\mathcal{H}((\exp H) k))$ are indexed by the Weyl group $W$, and for each $s\in W$, the eigenvalues of the corresponding Hessian are exactly of absolute values  
$\left|\langle s\mu,\alpha\rangle\sin\left(i\alpha( H)\right)\right|$, with $\alpha\in\Phi^+$ counted with their multiplicities. Thus \eqref{eq: conj} is the expected bound from stationary phase. However, proving \eqref{eq: conj} via stationary phase would require establishing the expected normal forms of the phase function that are locally uniform in both $\mu$ and $H$ which can be singular, and this seems to be a formidable task for now \cite{DKV83, BP16, Mar16, Li25}. 

What was known about Conjecture \ref{conj}? 
Assume that $\mu$ remains in a cone away from the Weyl chamber walls---more precisely, assume that there exists a constant $c>0$ with $|\langle\mu,\alpha\rangle|> c|\mu|$ for all $\alpha\in\Phi^+$. Then the bound \eqref{eq: conj} reduces to 
\begin{align} \label{eq: conj regular}
    |\psi(\mu, H)|\leq C\prod_{\alpha\in\Phi^+}\left(1+ |\mu|\cdot \left|\sin\left(i\alpha( H)\right)\right|\right)^{-\frac{m(\alpha)}{2}}.
\end{align}
Using stationary phase, 
Clerc established \eqref{eq: conj regular} for $H$ lying in a compact set contained in the open set of regular points (Theorem 3.4 of \cite{Cle88}), while Marshall established \eqref{eq: conj regular}  for $H$ lying in a small neighborhood of the origin (Theorem 1.6 of \cite{Mar16}). 
Our 
Theorem \ref{thm: main} seems to be the first result establishing the full pointwise bound \eqref{eq: conj} for spherical functions on a high-rank compact symmetric space. 

Our proof relies on explicit character formulas, namely the Weyl character formula and its consequences for the descent of characters to singular sets \eqref{eq: cha}. The Weyl group symmetry of the characters plays an essential role in the proof. 
The parabolic subgroups of the Weyl group and their cosets dictate the descent of characters to singular sets \eqref{eq: cha}. A key observation is that an appropriate grouping of terms in the descent formula, indexed by cosets, produces the cancellation needed to obtain the desired bounds; see \eqref{eq: cancellation case 2} and \eqref{eq: cancellation case 3}. Additionally, the Weyl group symmetry of both the characters and the bound \eqref{eq: main bd}, in both $\mu$ and $H$, allows one to reduce the proof to representative configurations, thereby simplifying the analysis. The framework developed here---based on the Weyl group symmetry, parabolic subgroups, and cancellation in the descent formula---is expected to extend to more general compact Lie groups, and we defer the treatment of the general case to future work.

We now discuss several consequences of Theorem \ref{thm: main}. 
For $\mu\in\Lambda^+$, let $\overline{\mu}$ and $\underline{\mu}$ denote the maximum and minimum of 
$\{\langle \mu+\rho,\alpha\rangle : \alpha \in \Phi^+\}$, respectively. Note that the second largest element of 
$\{\langle \mu+\rho,\alpha\rangle: \alpha \in \Phi^+\}$ 
is always comparable to $\overline{\mu}$ (in fact, it is at least $\tfrac{1}{2}\overline{\mu}$). It follows from the Weyl dimension formula \eqref{eq: dimension} that
\[
d_\mu \approx \overline{\mu}^2\underline{\mu}.
\]
For $H\in\mathfrak{t}$, let $c(H)$ denote 
the product of the two largest elements of 
$\{|\sin(\alpha(H)/2i)|:\alpha\in\Phi^+\}$. Then $\exp H$ is non-central in $G$ if and only if $c(H)\neq 0$.

As an immediate corollary of \eqref{eq: second}, we have:

\begin{thm}
For $\mu\in\Lambda^+$ and $H\in\mathfrak{t}$ such that $\exp H$ is non-central in $G$, we have 
\begin{align}\label{eq: pointwise}
    \frac{|\chi(\mu,H)|}{d_\mu}
    \leq C c(H)^{-1}(\overline{\mu}\underline{\mu})^{-1}.
\end{align}
\end{thm}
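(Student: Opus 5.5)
The plan is to deduce \eqref{eq: pointwise} from \eqref{eq: second} by bounding each of the $|W|=6$ summands separately, discarding one factor in each product. Fix $s\in W$ and $H$ with $\exp H$ non-central. Write $x_\alpha=|\sin(\alpha(H)/2i)|$ for $\alpha\in\Phi^+$. Since $\langle s(\mu+\rho),\alpha\rangle=\langle \mu+\rho,s^{-1}\alpha\rangle$ and $s^{-1}$ permutes $\Phi^+\cup(-\Phi^+)$, the map $\alpha\mapsto \pm s^{-1}\alpha$ is a bijection $\Phi^+\to\Phi^+$. Hence the multiset $\{|\langle s(\mu+\rho),\alpha\rangle|:\alpha\in\Phi^+\}$ equals $\{a_1,a_2,a_3\}:=\{\langle\mu+\rho,\beta\rangle:\beta\in\Phi^+\}$, where $a_1\le a_2\le a_3$, $a_3=a_1+a_2$, $a_1=\underline{\mu}$ and $a_3=\overline{\mu}$. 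All $a_i$ are positive because $\mu+\rho$ is strictly dominant. Only the pairing between these numbers and the roots $\alpha$ depends on $s$.

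Let $\alpha,\alpha'\in\Phi^+$ be two distinct roots at which the two largest values of $x_\alpha$ are attained, so that $x_\alpha x_{\alpha'}=c(H)>0$. In the summand for $s$, drop the factor belonging to the third root; that factor is $\le 1$. For the remaining two factors use $(1+t)^{-1}\le t^{-1}$ for $t>0$. Writing $a=|\langle s(\mu+\rho),\alpha\rangle|$ and $b=|\langle s(\mu+\rho),\alpha'\rangle|$, we get
\[
\prod_{\gamma\in\Phi^+}\bigl(1+|\langle s(\mu+\rho),\gamma\rangle|\,x_\gamma\bigr)^{-1}\le \frac{1}{ab\,x_\alpha x_{\alpha'}}=\frac{1}{ab\,c(H)}.
\]
Here $a$ and $b$ are two of $a_1,a_2,a_3$ taken at distinct positions of the multiset.

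It remains to show $ab\ge \tfrac12\overline{\mu}\,\underline{\mu}$ for every such pair. If the pair contains $a_3$, then the other element is at least $a_1$, so $ab\ge a_3a_1$. Otherwise $\{a,b\}=\{a_1,a_2\}$, and $a_2\ge \tfrac12 a_3$ because $a_3=a_1+a_2\le 2a_2$; thus $ab\ge \tfrac12 a_1a_3$. Summing over the six elements $s\in W$ and inserting this into \eqref{eq: second} gives $|\chi(\mu,H)|/d_\mu\le 12C\,c(H)^{-1}(\overline{\mu}\,\underline{\mu})^{-1}$, which is \eqref{eq: pointwise} after renaming the constant.

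There is no genuine obstacle here. The only point needing care is the observation that for every $s$ the roots carrying the two largest sines are paired with two distinct values among $a_1,a_2,a_3$, never with a repeated small one. This follows from the bijection induced by $s^{-1}$ together with the relation $a_3=a_1+a_2$ among the $\mathrm{SU}(3)$ root pairings.
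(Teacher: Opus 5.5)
Your proposal is correct and follows essentially the paper's own argument: bound each $W$-summand of \eqref{eq: second} by $c(H)^{-1}$ times the reciprocals of the two pairings attached to the roots carrying the two largest sines. Then note that these are two distinct entries of $\{\langle\mu+\rho,\alpha\rangle:\alpha\in\Phi^+\}$, whose product is $\gtrsim\overline{\mu}\,\underline{\mu}$. Your use of $a_3=a_1+a_2$ to get the explicit factor $\tfrac12$ just makes precise the paper's remark that the second largest pairing is at least $\tfrac12\overline{\mu}$.
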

\begin{proof}
Suppose the two largest elements of 
$\{|\sin(\alpha(H)/2i)|:\alpha\in\Phi^+\}$ 
are attained at the positive roots $\alpha^1,\alpha^2$. Then 
\eqref{eq: second} implies 
\begin{align*}
    \frac{|\chi(\mu,H)|}{d_\mu}
    \leq C\sum_{s\in W} c(H)^{-1}(|\langle s(\mu+\rho),\alpha^1\rangle|\cdot|\langle s(\mu+\rho),\alpha^2\rangle|)^{-1}.
\end{align*}
As $\{|\langle s(\mu+\rho),\alpha\rangle|:\alpha\in\Phi^+\}
=
\{\langle \mu+\rho,\alpha\rangle:\alpha\in\Phi^+\}$, at least one of
$
|\langle s(\mu+\rho),\alpha^1\rangle|$ and  $|\langle s(\mu+\rho),\alpha^2\rangle|$ 
is comparable to $\overline{\mu}$, while the other is bounded below by $\underline{\mu}$. Hence
\(
|\langle s(\mu+\rho),\alpha^1\rangle|\cdot
|\langle s(\mu+\rho),\alpha^2\rangle|
\geq
\overline{\mu}\,\underline{\mu}
\). The result follows.  
\end{proof}

Bounds such as \eqref{eq: pointwise} have been valuable in understanding the singularity of central measures on compact Lie groups \cite{Har98, HWY00}. In particular, it was shown in Theorem 1.1 of \cite{HWY00} that 
\begin{align}\label{eq: pointwise old}
    \frac{|\chi(\mu,H)|}{d_\mu}
    \leq C c(H)^{-1}d_{\mu}^{-1/2},
\end{align}
which was used to characterize $L^2$ powers of orbital measures. 
Noting that \(d_\mu \approx \overline{\mu}^2 \underline{\mu}\), our bound \eqref{eq: pointwise} improves upon \eqref{eq: pointwise old} by a factor of \(\underline{\mu}^{-1/2}\). 

As another application of Theorem \ref{thm: main}, we prove the following $L^p$ bound for irreducible characters of $\mathrm{SU}(3)$.

\begin{thm}\label{thm: Lp}  
For $\mu\in\Lambda^+$, we have 
    \begin{align}\label{eq: singular bound}
\|\chi_\mu\|_{L^p(G)}
\leq C\cdot
\begin{cases}
1, & 0<p<\frac83,\\[2mm]
\log^{3/8}(2+\underline{\mu}), & p=\frac83,\\[2mm]
\underline{\mu}^{\,3-\frac{8}{p}}, & \frac83<p<3,\\[2mm]
\underline{\mu}^{1/3}\log^{1/3}(2+\overline{\mu}/\underline{\mu}), & p=3,\\[2mm]
\overline{\mu}^{\,1-\frac{3}{p}}\underline{\mu}^{\,2-\frac{5}{p}}, & 3<p<5,\\[2mm]
\overline{\mu}^{2/5}\underline{\mu}\,\log^{1/5}(2+\overline{\mu}/\underline{\mu}), & p=5,\\[2mm]
\overline{\mu}^{\,2-\frac{8}{p}}\underline{\mu}, & p>5.
\end{cases}
\end{align}
\end{thm}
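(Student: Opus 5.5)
By the Weyl integration formula, $\|\chi_\mu\|_{L^p(G)}^p=\frac{1}{|W|}\int_{\mathfrak{t}/\ker\exp}|\chi(\mu,H)|^p\,|\Delta(H)|^2\,dH$ with $|\Delta(H)|^2=\prod_{\alpha\in\Phi^+}|2\sin(\alpha(H)/2i)|^2$. I will insert the bound \eqref{eq: first} of Theorem \ref{thm: main} and use $(\sum_{s\in W}x_s)^p\le C_p\sum_{s\in W}x_s^p$, which holds for every $p>0$ since $|W|=6$. Each $s\in W$ permutes the roots up to sign and preserves $|\Delta|$ and Haar measure, so the substitution $H\mapsto s^{-1}H$ shows that all six terms have the same integral. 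Write $a_\alpha=\langle\mu+\rho,\alpha\rangle$, so these values are $a_1,a_2,a_1+a_2$, with $\underline{\mu}=\min(a_1,a_2)$ and $\overline{\mu}=a_1+a_2$. Everything then reduces to estimating
\[
I_p:=\int_{T}\prod_{\alpha\in\Phi^+}F_{a_\alpha}\bigl(|\sin(\alpha(H)/2i)|\bigr)\,dH,\qquad F_a(r):=\min\{a,r^{-1}\}^p\,r^2,
\]
and showing that $I_p^{1/p}$ is bounded by the right-hand side of \eqref{eq: singular bound}.

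\textbf{Coordinates.} Use $\theta_1=\alpha_1(H)/2i$ and $\theta_2=\alpha_2(H)/2i$, so the third root gives $\theta_1+\theta_2$. The integrand is bounded by a constant away from the union of the three families of lines $\{\sin\theta_1=0\}$, $\{\sin\theta_2=0\}$, $\{\sin(\theta_1+\theta_2)=0\}$. By periodicity I localize to finitely many neighbourhoods of two types.
\begin{enumerate}[label=(\roman*)]
\item Neighbourhoods of the points where all three lines meet (the centre). Here all three sines are comparable to the linear forms $x=\theta_1$, $y=\theta_2$, $x+y$.
\item Neighbourhoods of a single line, away from the centre. Here only one factor is singular.
\end{enumerate}
Type (ii) is a one-variable computation:
\[
\int_0^1F_a(r)\,dr\approx a^{p-3}+\begin{cases}1,&p<3,\\ \log(2+a),&p=3,\\ a^{p-3},&p>3,\end{cases}
\]
applied with $a\in\{a_1,a_2,a_1+a_2\}$. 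These contributions are dominated by the type (i) contribution, or at least fit within the stated bounds.

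\textbf{The centre.} I decompose dyadically in $r=|(x,y)|\sim2^{-j}$ and, inside each annulus, dyadically in the distance to the nearest of the three lines. At most one of $|x|,|y|,|x+y|$ can be much smaller than $r$, since the other two are then $\approx r$. On the annulus $|(x,y)|\sim r$ the integrand is
\[
\approx\ \min\{a,r^{-1}\}^p r^2\cdot\min\{a',r^{-1}\}^p r^2\cdot \min\{a'',\delta^{-1}\}^p\delta^2,
\]
where $\delta\lesssim r$ is the small distance, $a''$ is the weight attached to the small root, and $a,a'$ are the other two weights. The measure of the region is $\approx r\,d\delta$. Integrating first in $\delta$ (the same one-variable computation, truncated at $r$) and then in $r$ produces a sum of power functions of $r$ on the ranges $r<1/\overline{\mu}$, $1/\overline{\mu}<r<1/\underline{\mu}$, and $r>1/\underline{\mu}$.

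The thresholds come out of this count. When $r>1/\underline{\mu}$ and all three forms are $\approx r$, the integrand is $r^{6-3p}$, integrated against $r\,dr$; this is integrable down to $r=1/\underline{\mu}$ exactly when $p<8/3$. That accounts for the bound $1$ when $p<8/3$, the $\log$ at $p=8/3$, and the factor $\underline{\mu}^{3p-8}$ for $p>8/3$ after taking $p$-th roots. The transition at $p=3$ comes from the one-dimensional wall integral $\int\delta^{2-p}$. The transition at $p=5$ comes from the intermediate range $1/\overline{\mu}<r<1/\underline{\mu}$: there the two factors carrying $\underline{\mu}$ are saturated at $\underline{\mu}^p$, and the remaining factor contributes $r^{2-p}$, integrated against $r^{5}\,dr$ (from the $r^2$ of the two saturated factors and the area element). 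The point $r\approx1/\overline{\mu}$ then gives $\overline{\mu}^{2p-8}\underline{\mu}^p$ for $p>5$. At the borderline exponents $p=3$ and $p=5$, the $\log(2+\overline{\mu}/\underline{\mu})$ factors arise because the relevant power of $r$ is critical over the whole range $[1/\overline{\mu},1/\underline{\mu}]$.

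\textbf{Main obstacle.} The hardest step is the bookkeeping in the centre neighbourhood. One has to decide which of the weights $a_1,a_2,a_1+a_2$ is attached to the near-vanishing root in each sector, and keep track of the regimes of $r$ and $\delta$ relative to $1/\overline{\mu}$ and $1/\underline{\mu}$, so that each of the seven cases yields exactly the claimed exponent and log power. The claimed bounds also need to be sharp enough to absorb all cross terms. I would first carry out the computation in the model case $a_1=\underline{\mu}\le a_2$. Since the bound is symmetric, I would then check that the sector where $x+y$ is small, which carries the weight $\overline{\mu}$, and the sector where $y$ is small are both dominated by the sector where $x$ is small, which carries $\underline{\mu}$.
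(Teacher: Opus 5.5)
The reductions are sound and match the paper: \eqref{eq: first} inserted into the Weyl integration formula, the substitution $H\mapsto s^{-1}H$ showing that all six $s$-terms have the same integral over the torus, and the single-wall neighbourhoods. The gap is that, once Theorem \ref{thm: main} is granted, the whole content of the statement is the two-dimensional integral near the centre (the paper's Proposition \ref{prop: I}). You do not carry this out, and both pieces of guidance you give for it are wrong.

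\textbf{The mechanism for $p=5$ is incorrect.} In the range $1/\overline{\mu}<r<1/\underline{\mu}$ only one of $a_1,a_2,a_1+a_2$ is comparable to $\underline{\mu}$; the other two are at least $\overline{\mu}/2$. So there is one saturated factor $\underline{\mu}^p r^2$, not two. Your count of $r^{2-p}$ against $r^5\,dr$ is critical at $p=8$ and would give $\underline{\mu}^{2p}\overline{\mu}^{p-8}$, not $\overline{\mu}^{2p-8}\underline{\mu}^{p}$. What actually happens is concentration at a wall whose root carries a weight $w\approx\overline{\mu}$:
\begin{itemize}
\item For $p>3$ the transverse integral is $\int_0^r F_w(\delta)\,d\delta\approx\overline{\mu}^{p-3}$.
\item Multiply by $\underline{\mu}^p r^2$ and by $r^{2-p}$ from the remaining factor, with area element $dr\,d\delta$. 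This gives $\overline{\mu}^{p-3}\underline{\mu}^{p}\int_{1/\overline{\mu}}^{1/\underline{\mu}} r^{4-p}\,dr$.
\item This integral is what is critical at $p=5$, and it yields the cases $3<p<5$, $p=5$ and $p>5$.
\end{itemize}
Likewise, the factor $\log(2+\overline{\mu}/\underline{\mu})$ at $p=3$ is the transverse integral $\int_{1/\overline{\mu}}^{r}\delta^{-1}\,d\delta$ at such a wall, with $r\approx 1/\underline{\mu}$.

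\textbf{Your planned domination check runs the wrong way and would fail.} The transverse integral grows with the weight. Hence for $p\geq 3$ the sectors near $x+y=0$ and $y=0$ (weights $\overline{\mu}$ and $a_2\geq \overline{\mu}/2$) dominate the sector near $x=0$, not the reverse.
\begin{itemize}
\item At $p=4$ the $x$-sector contributes only about $\underline{\mu}^4\log(2+\overline{\mu}/\underline{\mu})$. The $(x+y)$-sector contributes about $\overline{\mu}\,\underline{\mu}^3$, which is exactly the claimed size of $\|\chi_\mu\|_{L^4(G)}^4$.
\item At $p=3$ the $x$-sector gives only about $\underline{\mu}$, with no logarithm.
\end{itemize}
So computing your model sector and then checking domination cannot succeed; you must compute the $\overline{\mu}$-walls directly.

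The paper does this as follows. It folds by $W$ into one chamber at $H=0$ with coordinates $t_1,t_2\geq 0$. A rearrangement argument shows the worst weight assignment puts the smallest weight on $t_1+t_2$. It then restricts to $t_1\leq t_2$, so the near-vanishing coordinate carries the largest weight $a$. Finally it integrates in $t_1$ first and splits the $t_2$-integral at $a^{-1},b^{-1},c^{-1}$. Your dyadic scheme can work too, but only with the correct assignment of which wall dominates.
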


In \cite{Zha26}, we established
    \begin{align}\label{eq: regular bound}
        \|\chi_\mu\|_{L^p(G)}
\leq C\cdot
\begin{cases}
1, & 2\leq p<\frac83,\\[2mm]
\log^{3/8}(2+\overline{\mu}), & p=\frac83,\\[2mm]
\overline{\mu}^{\,3-\frac{8}{p}}, & p>\frac83.
\end{cases}
    \end{align}

In the ``regular regime'' $\overline{\mu} \approx \underline{\mu}$, the new bound \eqref{eq: singular bound} matches the bound \eqref{eq: regular bound}, which is expected to be sharp in this regime and was verified to be so for $\mu = N\rho$ ($N \in \mathbb{Z}_{\ge 1}$) \cite{Zha26}. 
However, in the ``singular regime'' $\overline{\mu}\gg\underline{\mu}$, an inspection shows that \eqref{eq: singular bound} improves upon \eqref{eq: regular bound}.
We expect \eqref{eq: singular bound} to be sharp for all spectral parameters $\mu$ and all $p>0$. 

Using $d_\mu\approx \overline{\mu}^2\underline{\mu}$, one readily obtains the following corollary of Theorem~\ref{thm: Lp}:

\begin{thm}
    For $\mu\in\Lambda^+$, we have 
    \begin{align}\label{eq: Lp dimension}
\|\chi_\mu\|_{L^p(G)}
\leq C
d_{\mu}^{\,1-\frac{8}{3p}},\qquad p>\frac83.
\end{align}
\end{thm}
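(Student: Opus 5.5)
We derive \eqref{eq: Lp dimension} directly from Theorem \ref{thm: Lp}, combining it with $d_\mu\approx\overline{\mu}^2\underline{\mu}$ and the elementary relations $1\le\underline{\mu}\le\overline{\mu}$. Set $\theta=1-\frac{8}{3p}$, so $\theta>0$ for $p>\frac83$. The target is then
\[
d_\mu^{\theta}\approx \overline{\mu}^{2\theta}\underline{\mu}^{\theta}=\overline{\mu}^{\,2-\frac{16}{3p}}\underline{\mu}^{\,1-\frac{8}{3p}} .
\]
In each range of $p$, we write the bound from \eqref{eq: singular bound} as $\overline{\mu}^{a}\underline{\mu}^{b}$, possibly times a logarithm. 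We then show it is at most a constant times $\overline{\mu}^{2\theta}\underline{\mu}^{\theta}$.

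For the pure powers, it suffices to check two things. First, $a\le 2\theta$. Second, $a+b\le 3\theta$, which makes the comparison work at $\overline{\mu}=\underline{\mu}$. Given these, we have $\overline{\mu}^{a}\underline{\mu}^{b}\le\overline{\mu}^{2\theta}\underline{\mu}^{\theta}$: write $\overline{\mu}^{a}\underline{\mu}^{b}=(\overline{\mu}/\underline{\mu})^{a}\underline{\mu}^{a+b}$ when $a\ge0$, and use $\underline{\mu}\le\overline{\mu}$ directly when $a<0$. The checks are as follows.
\begin{itemize}
\item For $\frac83<p<3$: $a=0$ and $b=3-\frac8p=3\theta$. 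Since $2\theta\ge0$ and $\overline{\mu}\ge\underline{\mu}$, we get $\underline{\mu}^{3\theta}\le\overline{\mu}^{2\theta}\underline{\mu}^{\theta}$.
\item For $3<p<5$: $a=1-\frac3p\le 2-\frac{16}{3p}$, because this is equivalent to $\frac{7}{3p}\le1$, i.e.\ $p\ge\frac73$. Also $a+b=3-\frac8p=3\theta$.
\item For $p>5$: $a=2-\frac8p\le 2\theta$ and $a+b=3\theta$.
\end{itemize}

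The endpoint and logarithmic cases need slack; this is the only delicate step. At $p=3$ we have $\theta=\frac19$, so the target is $\overline{\mu}^{2/9}\underline{\mu}^{1/9}$, while the bound is $\underline{\mu}^{1/3}\log^{1/3}(2+\overline{\mu}/\underline{\mu})$. Writing $x=\overline{\mu}/\underline{\mu}\ge1$, the bound divided by the target is $x^{-2/9}\log^{1/3}(2+x)$, which is bounded uniformly in $x$. At $p=5$ the target is $\overline{\mu}^{14/15}\underline{\mu}^{7/15}$, and the bound divided by the target is $x^{-8/15}\log^{1/5}(2+x)$, again bounded.

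The remaining cases are $p\le3$ with bound $1$ or $\log^{3/8}$. For $p$ slightly above $\frac83$ the relevant case is $\underline{\mu}^{3-8/p}$, handled above. The case $p=\frac83$ itself is excluded, since there $\theta=0$ and the logarithm could not be absorbed.

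Combining all ranges gives \eqref{eq: Lp dimension}. The constant $C$ depends only on the implied constants in $d_\mu\approx\overline{\mu}^2\underline{\mu}$ and in Theorem \ref{thm: Lp}, and on $p$ only through the endpoint logarithm suprema.
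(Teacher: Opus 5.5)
Your proof is correct and follows the paper's route. The paper only remarks that the bound follows readily from Theorem \ref{thm: Lp} together with $d_\mu\approx\overline{\mu}^2\underline{\mu}$; your exponent checks ($a\le 2\theta$, $a+b=3\theta$), and the absorption of the logarithms at $p=3,5$ via boundedness of $x^{-\epsilon}\log(2+x)$, supply exactly the verification the paper omits.

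One cosmetic slip: the ranges with bound $1$ or $\log^{3/8}$ are $p<\frac83$ and $p=\frac83$, not $p\le 3$.
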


The above bound improves upon Corollary 2.4 of \cite{Doo79}, which established \eqref{eq: Lp dimension} up to an arbitrarily small loss in the power of \(d_\mu\).

Finally, we note another interesting consequence of Theorem~\ref{thm: Lp}: 

\begin{thm}
    For any sequence \(\mu_n \in \Lambda^+\) with \(\overline{\mu_n} \to \infty\) and 
\(\underline{\mu_n}\leq C\), we have \(d_{\mu_n} \to \infty\) while 
\(\|\chi_{\mu_n}\|_{L^p(G)} \le K(p,C) < \infty\) for all \(n\) and all \(p < 3\). 
\end{thm}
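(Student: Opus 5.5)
This is an immediate consequence of Theorem~\ref{thm: Lp} together with the asymptotic $d_\mu\approx\overline{\mu}^2\underline{\mu}$. The plan has two steps: show $d_{\mu_n}\to\infty$, then bound $\|\chi_{\mu_n}\|_{L^p(G)}$ uniformly in $n$ for each fixed $p<3$ by reading off the cases of \eqref{eq: singular bound}.

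\textbf{Dimension.} Since $\underline{\mu_n}\geq \min_{\alpha\in\Phi^+}\langle\rho,\alpha\rangle>0$ and $d_{\mu_n}\approx \overline{\mu_n}^2\underline{\mu_n}$, we get
\[
d_{\mu_n}\gtrsim \overline{\mu_n}^2\to\infty .
\]
Alternatively, one can use the Weyl dimension formula \eqref{eq: dimension} directly. Each factor $\langle\mu+\rho,\alpha\rangle/\langle\rho,\alpha\rangle$ is at least $1$, and the factor attaining $\overline{\mu_n}$ tends to infinity.

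\textbf{$L^p$ norms.} Fix $p<3$ and apply Theorem~\ref{thm: Lp}. In each relevant case the right-hand side of \eqref{eq: singular bound} depends on $\mu$ only through $\underline{\mu}$:
\begin{itemize}[label={}]
\item for $0<p<\tfrac83$ it is $C$;
\item for $p=\tfrac83$ it is $C\log^{3/8}(2+\underline{\mu_n})\leq C\log^{3/8}(2+C)$;
\item for $\tfrac83<p<3$ it is $C\,\underline{\mu_n}^{\,3-8/p}$.
\end{itemize}
In the last case the exponent $3-8/p$ lies in $(0,\tfrac13)$, so the bound is at most $C\cdot \max(C,1)^{1/3}$. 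Hence $K(p,C)$ can be taken to be the maximum of these constants, which is independent of $n$.

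\textbf{Main subtlety.} The only point needing care is to avoid the cases $p\geq3$. These involve $\overline{\mu}$, which is unbounded along the sequence, and this is exactly why the hypothesis $p<3$ is needed. The argument itself is routine once Theorem~\ref{thm: Lp} is available.
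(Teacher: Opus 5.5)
Your proof is correct and follows exactly the route the paper intends. The paper presents this statement as an immediate consequence of Theorem~\ref{thm: Lp}: $d_{\mu_n}\to\infty$ follows from $d_\mu\approx\overline{\mu}^2\underline{\mu}$, and the uniform bound comes from the cases $0<p<3$ of \eqref{eq: singular bound}, which depend on $\mu$ only through $\underline{\mu}$.
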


This recovers a result of \cite{Rid72}, where such sequences were shown to exist for $\mathrm{SU}(n)$ and later generalized to all classical compact Lie groups in \cite{Giu83}, with applications to lacunary sets. In contrast, our result applies uniformly to all sequences in $\Lambda^+$ satisfying the conditions in the above theorem, going beyond the particular constructions in \cite{Rid72, Giu83}.

\begin{rem}
There is a parallel theory for spherical functions on symmetric spaces of noncompact type. In fact, our method of proving Theorem \ref{thm: main} applies equally well to establishing a similar pointwise bound for spherical functions on $\mathrm{SL}(3,\mathbb{C})/\mathrm{SU}(3)$ which is the noncompact dual of $\mathrm{SU}(3)$. Namely, let $\varphi(\lambda,H)$ be the spherical function on $\mathrm{SL}(3,\mathbb{C})/\mathrm{SU}(3)$ with spectral parameter $\lambda\in \mathfrak{a}^*$ and spatial parameter $H\in \mathfrak{a}$, normalized such that $\varphi(\lambda,0)=1$. 
Let 
$\varphi_0(H)=\varphi(0,H)$. 
Then 
\begin{align}\label{eq: complex}
    |\varphi(\lambda, H)|\leq C\varphi_0(H)\sum_{s\in W}\prod_{\alpha\in\Phi^+}\left( 1+
    |\langle s\lambda, \alpha\rangle \langle \alpha,H\rangle|\right)^{-1}
\end{align}
holds for some uniform constant $C$. We note that in a recent preprint \cite{BMMP26}, the same estimate \eqref{eq: complex} was established for spherical functions on general complex semisimple groups, by different methods. 

\end{rem}

\begin{rem}
It might be interesting to compare our bounds \eqref{eq: second} and \eqref{eq: pointwise} with character bounds for finite groups. In \cite{LT24}, it was established that there exists an absolute constant $c>0$ such that for all finite quasisimple groups $G$ of Lie type, irreducible characters $\chi$ of $G$, and elements $g\in G$, it holds 
$$|\chi(g)|\leq \chi(1)^{1-c\frac{\log |g^G|}{\log |G|}}.$$
Here $g^G$ denote the conjugacy class of $g$. All these bounds reflect the principle that elements with larger conjugacy classes (i.e., less singular elements) exhibit stronger oscillation and consequently smaller character values. 
\end{rem}

We overview the remaining sections of the paper. 
In Section \ref{sec: pre}, we set up the descent formula for characters (Lemma \ref{lem: cha}), use the Weyl group symmetry to reduce the bound \eqref{eq: first} to three cases of configurations of $\mu$ and $H$, and prove the first case. In Sections \ref{sec: case 2} and \ref{sec: case 3}, we exploit cancellation in the descent formula to treat the second and third cases, respectively. In Section \ref{sec: Lp}, we use Theorem \ref{thm: main} to deduce Theorem \ref{thm: Lp}.

\subsection*{Notations} We use $A\lesssim B$ to mean that there exists some constant $C>0$ such that $A\leq CB$. $A\approx B$ means that $A\lesssim B$ and $B\lesssim A$ hold at the same time. $A\ll B$ means that there exists a sufficiently small constant $c>0$ such that $A\leq cB$. 

\subsection*{Acknowledgments} The author would like to thank Xiaocheng Li and Simon Marshall for helpful discussions.

\section{Preliminaries}\label{sec: pre}
We use the invariant inner product on the Cartan subalgebra $\mathfrak{t}$ to identify
\[
i\mathfrak t^* \cong \mathfrak t.
\]
More precisely, for each $\alpha \in i\mathfrak t^*$, there exists a unique element
$H_\alpha \in \mathfrak t$ such that
\[
\alpha(H) = i\,\langle H_\alpha, H\rangle,
\qquad   H \in \mathfrak t.
\]
Via this identification, we will suppress the distinction between
$\alpha$ and $H_\alpha$, and simply regard $\alpha$ as an element of
$\mathfrak t$. 
Thus we may write
\[
\alpha(H) = i\,\langle \alpha, H\rangle,
\qquad H \in \mathfrak t.
\]

Let 
$$\Lambda=\left\{\mu\in\mathfrak{t}: \frac{2\langle\mu,\alpha\rangle}{\langle\alpha,\alpha\rangle}\in\mathbb{Z}\text{ for all }\alpha\in\widetilde{\Phi}\right\}$$
be the weight lattice. 
Let 
$$\Lambda_{\text{reg}}=\left\{\mu\in\mathfrak{t}: \frac{2\langle\mu,\alpha\rangle}{\langle\alpha,\alpha\rangle}\in\mathbb{Z}_{\neq 0}\text{ for }\alpha\in\widetilde{\Phi}\right\}$$ be the set of regular weights. Whatever the choice of the positive root system and thus of the set $\Lambda^+$ of dominant weights,
we have 
$$\bigsqcup_{s\in W}s(\Lambda^++\rho)=\Lambda_{\text{reg}}.$$
To smooth the application of the Weyl group symmetry of the characters, 
we write
$$\widetilde{\chi}(\mu,H)= \chi(\mu-\rho,H).$$
Then the Weyl character formula reads \cite{Wey39}
    $$\widetilde{\chi}(\mu, H)=\frac{\sum_{s\in W} (\det s) e^{i\langle s\mu,H\rangle} }{\sum_{s\in W} (\det s) e^{i\langle s\rho,H\rangle} }, \qquad\mu\in\Lambda_{\text{reg}}, \ H\in\mathfrak{t}.$$
    The denominator may be factored so that we have 
\begin{align}\label{eq: Weyl}
   \widetilde{\chi}(\mu, H)=\frac{\sum_{s\in W} (\det s) e^{i\langle s\mu,H\rangle} }{
    \prod_{\alpha\in\Phi^+}2i\sin\left(\frac{\langle \alpha,H\rangle}{2}\right)
    }.
\end{align}

To make notation lighter, for all $\alpha$ in the root system $\Phi$ and $H\in\mathfrak{t}$, we denote 
    $$\|\langle\alpha,H\rangle\|:=\left|\sin\left(\frac{\langle\alpha, H\rangle}{2 }\right)\right|.$$
Let $\widetilde{\Phi}=\{\alpha_0,\alpha_1,\alpha_2\}$ be the collection of extended simple roots for $\mathrm{SU}(3)$ (see Figure \ref{fig:roots}).
We may rewrite the bound \eqref{eq: first} using $\widetilde{\Phi}$ instead of positive roots as 
\begin{align}\label{eq: main bd}
    |\widetilde{\chi}(\mu, H)|\leq C\sum_{s\in W}\prod_{\alpha\in\widetilde{\Phi}}\min\left\{|\langle s\mu,\alpha\rangle|,   \|\langle\alpha,H\rangle\|^{-1}\right\}.
\end{align}

Explicitly, the Weyl group $W$ for $\mathrm{SU}(3)$ equals $\{e, s_{\alpha_0}, s_{\alpha_1}, s_{\alpha_2}, s_{\alpha_{1}}s_{\alpha_{0}}, s_{\alpha_{0}}s_{\alpha_{1}}\}$. We will need the following lemma, which expresses the Weyl group as the product of a parabolic subgroup and a set of coset representatives. 

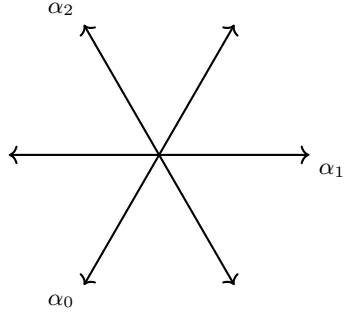
\begin{figure}
    \centering
\begin{tikzpicture}[scale=2, every node/.style={font=\small}]

\coordinate (a1) at (1,0);
\coordinate (a2) at (-0.5,0.8660254);
\coordinate (a0) at (-0.5,-0.8660254);

\coordinate (ma1) at (-1,0);
\coordinate (ma2) at (0.5,-0.8660254);
\coordinate (na0) at (0.5,0.8660254);

\draw[thick,->] (0,0) -- (a1);
\draw[thick,->] (0,0) -- (a2);
\draw[thick,->] (0,0) -- (a0);
\draw[thick,->] (0,0) -- (ma1);
\draw[thick,->] (0,0) -- (ma2);
\draw[thick,->] (0,0) -- (na0);

\node[below right] at (a1) {$\alpha_1$};
\node[above left] at (a2) {$\alpha_2$};
\node[below left] at (a0) {$\alpha_0$};

\end{tikzpicture}

    \caption{Root system for $\mathrm{SU}(3)$}
    \label{fig:roots}
\end{figure}

\begin{lem}[parabolic subgroups of the Weyl group and their cosets]\label{lem: parabolic}
Let $s_{\alpha}$ denote the reflection across the line through the origin orthogonal to $\alpha\in\widetilde{\Phi}$. Let $e$ denote the identity element of $W$. 
For $j=0,1,2$, let $W^j=\{e,s_{\alpha_j}\}$, and 
let $W_j=\{e, s_{\alpha_{j+1}},s_{\alpha_{j+1}}s_{\alpha_{j}}\}$, with $\alpha_{3}$ understood to be $\alpha_0$. 
Then $W=W^jW_j$ for $j=0,1,2$.
\end{lem}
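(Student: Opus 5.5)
The plan is a counting argument plus a check that the products are distinct. The Weyl group $W$ of $\mathrm{SU}(3)$ is the symmetric group $S_3$, with $|W|=6$. Here $|W^j|=2$ and $|W_j|=3$, so it suffices to show that the multiplication map
\[
W^j\times W_j\to W,\qquad (u,v)\mapsto uv,
\]
is injective. Then its image has $6$ elements and equals $W$. The computation is the same for each $j$, indices being taken mod $3$, so I will treat a general $j$ with the convention $\alpha_3=\alpha_0$.

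First I record the relevant relations. The reflections $s_{\alpha_0},s_{\alpha_1},s_{\alpha_2}$ are exactly the three reflections of $W$, since $\alpha_0=-\alpha_1-\alpha_2$ gives $s_{\alpha_0}=s_{\alpha_1+\alpha_2}$. Any two distinct ones, $s_a$ and $s_b$, generate $W$, and their product $s_as_b$ is a rotation of order $3$.

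Next I check injectivity. Suppose $u_1v_1=u_2v_2$ with $u_i\in W^j$ and $v_i\in W_j$. Then $u_2^{-1}u_1=v_2v_1^{-1}$. The left side lies in $W^j=\{e,s_{\alpha_j}\}$, so it suffices to show that $s_{\alpha_j}\notin W_jW_j^{-1}$.

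The elements of $W_jW_j^{-1}$ are $v_2v_1^{-1}$ for $v_1,v_2\in\{e,\ s_{\alpha_{j+1}},\ s_{\alpha_{j+1}}s_{\alpha_j}\}$. Listing the nontrivial ones:
\begin{itemize}
\item $s_{\alpha_{j+1}}$, a reflection different from $s_{\alpha_j}$;
\item $s_{\alpha_{j+1}}s_{\alpha_j}$ and its inverse $s_{\alpha_j}s_{\alpha_{j+1}}$, both rotations of order $3$, hence not reflections;
\item $s_{\alpha_{j+1}}(s_{\alpha_{j+1}}s_{\alpha_j})^{-1}=s_{\alpha_{j+1}}s_{\alpha_j}s_{\alpha_{j+1}}=s_{s_{\alpha_{j+1}}\alpha_j}$. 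Since $s_{\alpha_{j+1}}\alpha_j=\alpha_j+\alpha_{j+1}=-\alpha_{j+2}$, this equals $s_{\alpha_{j+2}}\neq s_{\alpha_j}$;
\item $(s_{\alpha_{j+1}}s_{\alpha_j})s_{\alpha_{j+1}}=s_{\alpha_{j+2}}$, the inverse of the previous element.
\end{itemize}
None of these is $s_{\alpha_j}$, so $u_1=u_2$ and then $v_1=v_2$.

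The only place where care is needed is the reflection identity $s_{\alpha_{j+1}}s_{\alpha_j}s_{\alpha_{j+1}}=s_{\alpha_{j+2}}$. It relies on the fact that the angle between extended simple roots $\alpha_j,\alpha_{j+1}$ is $120^\circ$, which gives $\alpha_j+\alpha_{j+1}=-\alpha_{j+2}$, as can be read off Figure~\ref{fig:roots}.

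Alternatively, one can avoid the computation by noting that each coset $uW^j$ has two elements and checking that the three elements of $W_j$ lie in distinct right cosets $W^jv$. This reduces to the same relations.
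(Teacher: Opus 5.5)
Your proof is correct. It reduces the claim, via $|W|=6$, to injectivity of $(u,v)\mapsto uv$. That injectivity comes down to $s_{\alpha_j}\notin W_jW_j^{-1}$, which you verify using $s_{\alpha_{j+1}}s_{\alpha_j}s_{\alpha_{j+1}}=s_{\alpha_{j+2}}$.

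This is the same approach as the paper: it states that the lemma can be checked by hand and only cites Section 1.10 of Humphreys for the general fact about parabolic subgroups and their cosets. Your argument is exactly that hand verification.
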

\begin{proof}
    This can certainly be checked easily by hand. We refer to Section 1.10 of \cite{Hum90} for a general statement. 
\end{proof}

For $j=0,1,2$ and $X\in\mathfrak{t}$, 
let 
$$X^j=\mathrm{Proj}_{\alpha_j} X = \frac{\langle X, \alpha_j \rangle}{|\alpha_j|^2} \alpha_j,$$
$$X_j=X-X^j.$$
Then clearly from the definition of the weight lattice $\Lambda$ we have: 
\begin{lem}[weight projection]\label{lem: proj weights}
    For all $\mu\in \Lambda$, $\mu^j\in \mathbb{Z}\cdot\frac12\alpha_j$. 
\end{lem}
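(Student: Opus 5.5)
The plan is to unwind the definitions. The weight lattice is defined by the condition $\frac{2\langle\mu,\alpha\rangle}{\langle\alpha,\alpha\rangle}\in\mathbb{Z}$ for every $\alpha\in\widetilde{\Phi}$. We then compute the projection of $\mu$ onto the line $\mathbb{R}\alpha_j$ and express its coefficient as exactly this integer, up to a factor of one half.

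Fix $\mu\in\Lambda$ and $j\in\{0,1,2\}$. By definition,
\[
\mu^j=\mathrm{Proj}_{\alpha_j}\mu=\frac{\langle \mu,\alpha_j\rangle}{|\alpha_j|^2}\,\alpha_j=\frac12\cdot\frac{2\langle\mu,\alpha_j\rangle}{\langle\alpha_j,\alpha_j\rangle}\,\alpha_j .
\]
Since $\alpha_j\in\widetilde{\Phi}$ and $\mu\in\Lambda$, the number $n_j:=\frac{2\langle\mu,\alpha_j\rangle}{\langle\alpha_j,\alpha_j\rangle}$ is an integer. Hence $\mu^j=n_j\cdot\frac12\alpha_j\in\mathbb{Z}\cdot\frac12\alpha_j$, which is the claim.

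There is essentially no obstacle here. The one point to check is that the identification $i\mathfrak{t}^*\cong\mathfrak{t}$ uses the same invariant inner product that appears in the definition of $\Lambda$, so that $|\alpha_j|^2=\langle\alpha_j,\alpha_j\rangle$. This holds by construction, since Section~\ref{sec: pre} makes that identification through the invariant inner product.

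We will also note, for later use in the descent formula, the complementary fact that $\mu_j=\mu-\mu^j$ is orthogonal to $\alpha_j$. It is therefore fixed by $s_{\alpha_j}$, while $s_{\alpha_j}\mu^j=-\mu^j$.
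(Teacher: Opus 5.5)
Your proposal is correct and is exactly the argument the paper intends. The paper states the lemma follows ``clearly from the definition of the weight lattice,'' and your computation $\mu^j=\tfrac12 n_j\alpha_j$ with $n_j=2\langle\mu,\alpha_j\rangle/\langle\alpha_j,\alpha_j\rangle\in\mathbb{Z}$ is that unwinding made explicit; the integrality holds because each $\alpha_j$, including $\alpha_0$, lies in $\widetilde{\Phi}$.
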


For $j=0,1,2$ and $X,\nu\in \mathbb{R}\alpha_j$, let 
$$\widetilde{\chi}^j(\nu,X)=\frac{\sin(\langle\nu, X\rangle)}{\sin(\frac{\langle \alpha_j, X\rangle}{2})}.$$
We have the following key ``descent formula'' (in the terminology of \cite{CN01} for spherical functions on complex groups), which decomposes irreducible characters into sums of lower-rank characters, thereby clarifying their behavior near the alcove walls $\{H\in\mathfrak{t}:\langle\alpha,H\rangle=2\pi k\}$, $\alpha\in\widetilde{\Phi}$, $k\in\mathbb{Z}$. In the proof of Theorem \ref{thm: main}, a key observation is that this decomposition exhibits subtle cancellation among certain terms (see \eqref{eq: cancellation case 2} and \eqref{eq: cancellation case 3}).

\begin{lem}[character descent]\label{lem: cha}
We have 
\begin{align}\label{eq: cha}
    \widetilde{\chi}(\mu, H)=\frac{1}{\prod_{k\neq j}2i\sin\left(\frac{\langle\alpha_k, H\rangle}{2}\right)}\sum_{s_j\in W_j} (\det s_j) 
e^{i\langle s_j\mu,H_{j}\rangle}\widetilde{\chi}^j((s_j\mu)^j, H^j).
\end{align}
\end{lem}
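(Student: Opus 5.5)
The plan is to start from the Weyl character formula \eqref{eq: Weyl}. We factor the Weyl group numerator using the coset decomposition of Lemma~\ref{lem: parabolic}, namely $W=W^jW_j$. Every $s\in W$ is then uniquely $s=t\,s_j$ with $t\in W^j=\{e,s_{\alpha_j}\}$ and $s_j\in W_j$, and $\det s=\det t\cdot\det s_j$, so
\[
\sum_{s\in W}(\det s)e^{i\langle s\mu,H\rangle}=\sum_{s_j\in W_j}(\det s_j)\bigl(e^{i\langle s_j\mu,H\rangle}-e^{i\langle s_{\alpha_j}s_j\mu,H\rangle}\bigr).
\]
The rest of the argument rewrites each pair of terms in the bracket in terms of the splitting $X=X^j+X_j$.

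Write $\nu=s_j\mu$. The reflection $s_{\alpha_j}$ fixes $\nu_j$ (which is orthogonal to $\alpha_j$) and negates $\nu^j$. Likewise $H=H^j+H_j$, where $H^j\in\R\alpha_j$ and $H_j\perp\alpha_j$. Cross terms vanish because $\langle\nu_j,H^j\rangle=0=\langle\nu^j,H_j\rangle$, so $\langle\nu,H\rangle=\langle\nu_j,H_j\rangle+\langle\nu^j,H^j\rangle$. It follows that
\[
e^{i\langle \nu,H\rangle}-e^{i\langle s_{\alpha_j}\nu,H\rangle}=e^{i\langle\nu_j,H_j\rangle}\cdot 2i\sin\langle\nu^j,H^j\rangle .
\]

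Next we put back the denominator. In $\prod_{\alpha\in\Phi^+}2i\sin(\langle\alpha,H\rangle/2)$ we may replace the positive roots by $\widetilde\Phi$, since $\alpha_0=-(\alpha_1+\alpha_2)$ and $\sin$ is odd. Doing so changes the product at most by a global sign, and that sign has to be tracked against the paper's convention for $\Phi^+=\{\alpha_1,\alpha_2,-\alpha_0\}$ (this is the only place a sign bookkeeping issue arises). We then separate off the factor $2i\sin(\langle\alpha_j,H\rangle/2)$ and note that $\langle\alpha_j,H\rangle=\langle\alpha_j,H^j\rangle$. The quotient $2i\sin\langle\nu^j,H^j\rangle / \bigl(2i\sin(\langle\alpha_j,H^j\rangle/2)\bigr)$ is exactly $\widetilde\chi^j(\nu^j,H^j)$. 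Finally, $\langle\nu_j,H_j\rangle=\langle\nu,H_j\rangle$ since $H_j\perp\alpha_j$, which produces the factor $e^{i\langle s_j\mu,H_j\rangle}$ in \eqref{eq: cha}.

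All of these identities hold for generic $H$. The formula at singular $H$ is then understood by continuity, with $\widetilde\chi^j$ extended as the $\mathrm{SU}(2)$ character via Lemma~\ref{lem: proj weights}, since $\nu^j\in\frac12\mathbb Z\alpha_j$. Beyond the sign check, I expect no real obstacle.
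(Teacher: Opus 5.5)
Your proposal is correct and follows the paper's proof: the coset decomposition $W=W^jW_j$ pairs $s_j$ with $s_{\alpha_j}s_j$, and splitting along $\mathbb{R}\alpha_j$ and its orthogonal complement turns each pair into $e^{i\langle s_j\mu,H_j\rangle}\widetilde{\chi}^j((s_j\mu)^j,H^j)$. You split $\nu=s_j\mu$ where the paper splits $H$, which is the same computation.

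The sign you left open is not trivial. Since $\Phi^+=\{\alpha_1,\alpha_2,-\alpha_0\}$ differs from $\widetilde{\Phi}$ in exactly one sign, $\prod_{\alpha\in\widetilde{\Phi}}2i\sin(\langle\alpha,H\rangle/2)=-\prod_{\alpha\in\Phi^+}2i\sin(\langle\alpha,H\rangle/2)$. So the right-hand side of \eqref{eq: cha} actually equals $-\widetilde{\chi}(\mu,H)$; for $\mu=\rho$ it is $-1$. The paper's proof makes the same replacement without comment when passing from \eqref{eq: Weyl}. The discrepancy is harmless, because every later use of the lemma only bounds $|\widetilde{\chi}(\mu,H)|$.
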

\begin{proof}
A general version for compact Lie groups was established in Lemma 2.4 of \cite{Zha26}. We include the proof here for completeness.
Applying Lemma \ref{lem: parabolic}, we may rewrite the Weyl character formula \eqref{eq: Weyl} as 
\begin{align*}
\widetilde{\chi}(\mu, H)=\frac{1}{
    \prod_{k\neq j}2i\sin\left(\frac{\langle \alpha_k,H\rangle}{2}\right)
    }\sum_{s_j\in W_j} (\det s_j)\frac{e^{i\langle s_j\mu,H\rangle}-e^{i\langle s_{\alpha_j}s_j\mu,H\rangle}}{2i\sin\left(\frac{\langle \alpha_j,H\rangle}{2}\right)}.
\end{align*}
Write $H=H_j+H^j$ and noting that $s_{\alpha_j}H_j=H_j$ while $s_{\alpha_j}H^j=-H^j$, the above formula becomes 
\begin{align*}
    \frac{1}{
    \prod_{k\neq j}2i\sin\left(\frac{\langle \alpha_k,H\rangle}{2}\right)
    }\sum_{s_j\in W_j} (\det s_j)
    e^{i\langle s_j\mu,H_j\rangle}
    \frac{e^{i\langle s_j\mu,H^j\rangle}-e^{-i\langle s_j\mu,H^j\rangle}}{2i\sin\left(\frac{\langle \alpha_j,H\rangle}{2}\right)}.
\end{align*}
Noting that 
\begin{align*}
    \frac{e^{i\langle s_j\mu,H^j\rangle}-e^{-i\langle s_j\mu,H^j\rangle}}{2i\sin\left(\frac{\langle \alpha_j,H\rangle}{2}\right)}=\frac{e^{i\langle (s_j\mu)^j,H^j\rangle}-e^{-i\langle (s_j\mu)^j,H^j\rangle}}{2i\sin\left(\frac{\langle \alpha_j,H\rangle}{2}\right)} =\widetilde{\chi}^j((s_j\mu)^j, H^j),
\end{align*}
the proof is finished. 
\end{proof}

We need the following character bound for $\mathrm{SU}(2)$: 

\begin{lem}[rank-one bound]\label{lem: simple cha bd}
    For $j=0,1,2$, $X\in\mathbb{R} \alpha_j$, and $\nu\in \mathbb{Z}\cdot\frac12\alpha_j$, 
$$|\widetilde{\chi}^j (\nu, X)|\leq  \min\{2|\nu|/|\alpha_j|,\|\alpha_j(X)\|^{-1}\}.$$
\end{lem}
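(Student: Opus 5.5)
The plan is to reduce the statement to the classical $\mathrm{SU}(2)$ bound \eqref{eq: rank one} by choosing coordinates along $\alpha_j$.

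\textbf{Coordinates.} Since $\nu\in\mathbb{Z}\cdot\frac12\alpha_j$, write $\nu=\frac n2\alpha_j$ with $n\in\mathbb{Z}$. Then $|n|=2|\nu|/|\alpha_j|$. Put $\theta=\frac{\langle\alpha_j,X\rangle}{2}$, so that $\langle\nu,X\rangle=\frac n2\langle\alpha_j,X\rangle=n\theta$. This gives
$$\widetilde{\chi}^j(\nu,X)=\frac{\sin(n\theta)}{\sin\theta},$$
where the quotient is understood by continuity when $\sin\theta=0$. Also $\|\alpha_j(X)\|=|\sin\theta|$ in the paper's notation, meaning $|\sin(\langle\alpha_j,X\rangle/2)|$. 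The claim therefore becomes
$$\left|\frac{\sin(n\theta)}{\sin\theta}\right|\le\min\{|n|,|\sin\theta|^{-1}\}.$$

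\textbf{Reduction to $n\ge 0$.} Since $\sin$ is odd, replacing $n$ by $-n$ leaves the absolute value unchanged. The case $n=0$ is trivial because both sides are $0$.

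\textbf{The two bounds.} For $n\ge1$:
\begin{itemize}
\item The bound by $|\sin\theta|^{-1}$ follows immediately from $|\sin(n\theta)|\le 1$.
\item The bound by $n$ follows from $|\sin(n\theta)|\le n|\sin\theta|$. This is proved by induction using
$$\sin((n+1)\theta)=\sin(n\theta)\cos\theta+\cos(n\theta)\sin\theta,$$
which gives $|\sin((n+1)\theta)|\le |\sin(n\theta)|+|\sin\theta|$.
\end{itemize}

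\textbf{Degenerate points.} At points where $\sin\theta=0$, the right-hand side is $\min\{|n|,\infty\}=|n|$. The continuous extension of the quotient has absolute value $|n|$ there, since $\sin(n\theta)/\sin\theta\to \pm n$, so the bound also holds at these points.

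There is no real obstacle. The only care needed is the bookkeeping identity $|n|=2|\nu|/|\alpha_j|$ and the interpretation of the quotient at the removable singularities.
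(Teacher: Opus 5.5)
Your proof is correct and follows the same route as the paper, which writes the claim as the classical $\mathrm{SU}(2)$ bound \eqref{eq: rank one} and omits the elementary proof. You supply that omitted argument: the substitution $\nu=\frac n2\alpha_j$, $\theta=\langle\alpha_j,X\rangle/2$ with $|n|=2|\nu|/|\alpha_j|$, the estimates $|\sin(n\theta)|\le |n|\,|\sin\theta|$ and $|\sin(n\theta)|\le 1$, and the treatment of the removable singularities.
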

\begin{proof}
    This is the same as \eqref{eq: rank one}, the proof of which is elementary and omitted. 
\end{proof}

Combing Lemmas \ref{lem: proj weights} and \ref{lem: simple cha bd}, we obtain 
\begin{align}\label{eq: rank one bd}
    |\widetilde{\chi}^j (\mu^j, X)|\leq  \min\{2|\mu^j|/|\alpha_j|,\|\alpha_j(X)\|^{-1}\}, \qquad\text{for all }\mu\in\Lambda.
\end{align}

We now reduce the desired bound \eqref{eq: main bd}  to suitable cases for the spectral and spatial parameters, by using the Weyl group symmetry of this bound. 

First, we have the well-known Weyl group symmetry of the characters $$\widetilde{\chi}(\mu,H)=\widetilde{\chi}(\mu,sH), \qquad \text{for all }s\in W.$$
Thus both sides of the inequality \eqref{eq: main bd} are invariant under the symmetry 
$$H\mapsto sH,\qquad s\in W.$$
Noting that $W$ acts by permuting the projective roots (i.e., roots modulo sign), 
in proving \eqref{eq: main bd} we may assume without loss of generality that 
\begin{align}\label{eq: assumption on H}
   \|\langle\alpha_0, H\rangle\|\leq \|\langle\alpha_2, H\rangle\|\leq \|\langle\alpha_1, H\rangle\|. 
\end{align}

Second, it is also well known that 
$$\widetilde{\chi}(\mu,H)=(\det s)\widetilde{\chi}(s\mu,H), \qquad \text{for all }s\in W.$$
Thus both sides of the inequality \eqref{eq: main bd} are also invariant under the symmetry 
$$\mu\mapsto s\mu,\qquad s\in W.$$
Then we may also assume without loss of generality that 
\begin{align}\label{eq: assumption on mu}
     | \langle\mu,\alpha_1\rangle|\leq  |\langle\mu,\alpha_2\rangle|\leq |\langle\mu,\alpha_0\rangle|.   
\end{align}
This implies that 
\begin{align}\label{eq: assumption on mu 2}
    | \langle\mu,\alpha_1\rangle|\lesssim |\mu|\approx |\langle\mu,\alpha_2\rangle|\approx |\langle\mu,\alpha_0\rangle|.
\end{align}

\begin{proof}[Proof of Theorem \ref{thm: main}]
Note that if $\|\langle\alpha,H\rangle\|\leq |\mu|^{-1}$ for two roots $\alpha\in \widetilde{\Phi}$, then $\|\langle\alpha,H\rangle\|\lesssim |\mu|^{-1}$ for all $\alpha\in\widetilde{\Phi}$.  To prove Theorem \ref{thm: main}, because of \eqref{eq: assumption on H} and \eqref{eq: assumption on mu 2}, it suffices to prove \eqref{eq: main bd} in the following three cases:

\noindent \textbf{Case 1}.  $\|\langle\alpha, H\rangle\|\lesssim |\mu|^{-1}$ for all $\alpha\in\widetilde{\Phi}$. 
This case will be proved in Proposition \ref{prop: case 1}. 

\noindent\textbf{Case 2}. 
$\|\langle\alpha_0, H\rangle\|\leq|\mu|^{-1}\leq  \|\langle\alpha_2, H\rangle\|\leq \|\langle\alpha_1, H\rangle\|$ and $| \langle\mu,\alpha_1\rangle|\lesssim |\mu|\approx |\langle\mu,\alpha_2\rangle|\approx |\langle\mu,\alpha_0\rangle|$.
This case will be proved in Proposition \ref{prop: case 2}. 

\noindent\textbf{Case 3}.  
$|\mu|^{-1}\leq \|\langle\alpha_0,H\rangle\|\leq \|\langle\alpha_2,H\rangle\|\leq \|\langle\alpha_1,H\rangle\|$ and $| \langle\mu,\alpha_1\rangle|\lesssim |\mu|\approx |\langle\mu,\alpha_2\rangle|\approx |\langle\mu,\alpha_0\rangle|$. 
This case will be proved in Proposition \ref{prop: case 3}. 
\end{proof}

We finish this section by proving \textbf{Case 1}, and treat the other two cases in the subsequent two sections respectively. 

\begin{prop}[\textbf{Case 1}]\label{prop: case 1}
    Assume that $\|\langle\alpha, H\rangle\|\lesssim |\mu|^{-1}$ for all $\alpha\in\widetilde{\Phi}$. Then \eqref{eq: main bd} is true. 
\end{prop}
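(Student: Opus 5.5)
In this case I would not use the descent formula at all. The idea is to compare both sides of \eqref{eq: main bd} with the dimension $d_{\mu-\rho}$: the left side is always at most the dimension, and in Case 1 already the single term $s=e$ on the right is comparable to the dimension.

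\emph{Step 1: the left-hand side.} For $\mu\in\Lambda_{\text{reg}}$, write $\mu=w(\lambda+\rho)$ with $\lambda\in\Lambda^+$ and $w\in W$. By the Weyl group symmetry in $\mu$ we have $|\widetilde{\chi}(\mu,H)|=|\chi(\lambda,H)|$. A character is bounded in absolute value by its value at the identity, so $|\chi(\lambda,H)|\leq d_\lambda$. By the Weyl dimension formula \eqref{eq: dimension},
\[
d_\lambda=\prod_{\alpha\in\Phi^+}\frac{\langle\lambda+\rho,\alpha\rangle}{\langle\rho,\alpha\rangle}.
\]
The set $\{|\langle \lambda+\rho,\alpha\rangle|:\alpha\in\Phi^+\}$ is invariant under $\lambda+\rho\mapsto w(\lambda+\rho)$. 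It also coincides with the corresponding set taken over $\widetilde{\Phi}$, since $\widetilde{\Phi}$ consists of one root from each $\pm$ pair. Hence
\[
|\widetilde{\chi}(\mu,H)|\lesssim\prod_{\alpha\in\widetilde{\Phi}}|\langle\mu,\alpha\rangle|.
\]

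\emph{Step 2: the right-hand side.} Keep only the term $s=e$ in \eqref{eq: main bd}. The Case 1 hypothesis gives $\|\langle\alpha,H\rangle\|^{-1}\geq c|\mu|$ for some fixed $c>0$. Since $|\langle\mu,\alpha\rangle|\leq|\mu||\alpha|$ and $|\alpha|$ is a fixed constant, this yields $\|\langle\alpha,H\rangle\|^{-1}\gtrsim|\langle\mu,\alpha\rangle|$. Therefore
\[
\min\{|\langle\mu,\alpha\rangle|,\|\langle\alpha,H\rangle\|^{-1}\}\gtrsim|\langle\mu,\alpha\rangle|
\]
for each $\alpha\in\widetilde{\Phi}$.

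\emph{Step 3: conclusion.} Multiplying the bound of Step 2 over the three roots in $\widetilde{\Phi}$ gives
\[
\sum_{s\in W}\prod_{\alpha\in\widetilde{\Phi}}\min\{|\langle s\mu,\alpha\rangle|,\|\langle\alpha,H\rangle\|^{-1}\}\gtrsim\prod_{\alpha\in\widetilde{\Phi}}|\langle\mu,\alpha\rangle|.
\]
Combined with Step 1, this proves \eqref{eq: main bd}. All terms are nonnegative, so discarding $s\neq e$ is harmless.

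There is no genuine obstacle here. The only point needing care is the bookkeeping of constants: the implicit constant in the Case 1 hypothesis, the lengths of the roots, and the factors $\langle\rho,\alpha\rangle$ are all absorbed into $C$. The cancellation phenomena mentioned in the introduction are not needed in this case.
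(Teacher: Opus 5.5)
Your proposal is correct and is essentially the paper's proof: bound $|\widetilde{\chi}(\mu,H)|$ by the dimension via the Weyl dimension formula, and use the Case 1 hypothesis $\|\langle\alpha,H\rangle\|^{-1}\gtrsim|\mu|\gtrsim|\langle s\mu,\alpha\rangle|$ to show the right-hand side of \eqref{eq: main bd} is at least comparable to $\prod_{\alpha\in\widetilde{\Phi}}|\langle\mu,\alpha\rangle|$. The only difference is that the paper evaluates all $|W|$ terms, each equal to this product, while you keep only the $s=e$ term; this is immaterial.
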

\begin{proof}
    We have $$\|\langle\alpha, H\rangle\|^{-1}\gtrsim |\mu|\gtrsim|\langle s\mu,\alpha\rangle|$$ for all $s\in W$ and $\alpha\in\widetilde{\Phi}$, and so the desired bound reduces to 
$$|\widetilde{\chi}(\mu, H)|\lesssim \sum_{s\in W}\prod_{\alpha\in\widetilde{\Phi}}|\langle s\mu,\alpha\rangle|=|W|\cdot \prod_{\alpha\in\widetilde{\Phi}}|\langle \mu,\alpha\rangle|.$$
This follows from the Weyl dimension formula:
$$|\widetilde{\chi}(\mu,H)|\leq |\widetilde{\chi}(\mu,0)|=\frac{\prod_{\alpha\in\widetilde{\Phi}}|\langle\mu,\alpha\rangle|}{\prod_{\alpha\in\widetilde{\Phi}}|\langle\rho,\alpha\rangle|}.$$
\end{proof}

\section{Proof of \textbf{Case 2}}\label{sec: case 2}

\begin{prop}[\textbf{Case 2}]\label{prop: case 2}
    Assume that $\|\langle\alpha_0, H\rangle\|\leq|\mu|^{-1}\leq  \|\langle\alpha_2, H\rangle\|\leq \|\langle\alpha_1, H\rangle\|$ and $| \langle\mu,\alpha_1\rangle|\lesssim |\mu|\approx |\langle\mu,\alpha_2\rangle|\approx |\langle\mu,\alpha_0\rangle|$. Then \eqref{eq: main bd} is true. 
\end{prop}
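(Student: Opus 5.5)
The plan is to apply the descent formula, Lemma \ref{lem: cha}, with $j=0$, since $\alpha_0$ is the only root for which $H$ is close to a wall, and then to exploit cancellation between the two terms of the resulting sum that carry large rank-one weights. With $j=0$ the prefactor is $\bigl(4\sin(\langle\alpha_1,H\rangle/2)\sin(\langle\alpha_2,H\rangle/2)\bigr)^{-1}$, of size $\|\langle\alpha_1,H\rangle\|^{-1}\|\langle\alpha_2,H\rangle\|^{-1}$. The sum runs over $W_0=\{e,s_{\alpha_1},s_{\alpha_1}s_{\alpha_0}\}$, and I will compute the pairings $\langle s_0\mu,\alpha_0\rangle=\langle \mu,s_0^{-1}\alpha_0\rangle$:
\begin{itemize}[label={}]
\item for $s_0=e$ and $s_0=s_{\alpha_1}$ they are $\pm\langle\mu,\alpha_0\rangle$ and $\pm\langle\mu,\alpha_2\rangle$, both of size $\approx|\mu|$;
\item for $s_0=s_{\alpha_1}s_{\alpha_0}$ it is $\pm\langle\mu,\alpha_1\rangle$, which may be small.
\end{itemize}

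First I identify the target. Under the Case 2 hypotheses, the right side of \eqref{eq: main bd} is $\gtrsim$ the sum of two contributions. The $s=e$ term gives
$$|\mu|\,\|\langle\alpha_2,H\rangle\|^{-1}\min\{|\langle\mu,\alpha_1\rangle|,\|\langle\alpha_1,H\rangle\|^{-1}\}.$$
The term with an $s$ sending the small pairing to $\alpha_0$ gives
$$|\langle\mu,\alpha_1\rangle|\,\|\langle\alpha_1,H\rangle\|^{-1}\|\langle\alpha_2,H\rangle\|^{-1},$$
using $\|\langle\alpha_0,H\rangle\|^{-1}\ge|\mu|$ and $\|\langle\alpha_{1,2},H\rangle\|^{-1}\le|\mu|$. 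The third descent term ($s_0=s_{\alpha_1}s_{\alpha_0}$) is handled immediately by \eqref{eq: rank one bd}: its rank-one factor is $\lesssim|\langle\mu,\alpha_1\rangle|$, which together with the prefactor gives exactly the second contribution.

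It remains to treat the sum of the $e$ and $s_{\alpha_1}$ terms. If $|\langle\mu,\alpha_1\rangle|\,\|\langle\alpha_1,H\rangle\|\gtrsim 1$, the trivial bound $\lesssim|\mu|$ on each rank-one factor already gives the first contribution. Otherwise I use cancellation. The two terms are $F(\mu)-F(s_{\alpha_1}\mu)$, where
$$F(\nu)=e^{i\langle\nu,H_0\rangle}\frac{\sin\langle\nu,H^0\rangle}{\sin(\langle\alpha_0,H\rangle/2)},$$
which is an entire function of $\nu\in\mathfrak t$. Moreover $s_{\alpha_1}\mu-\mu$ is a multiple of $\alpha_1$ of length $\approx|\langle\mu,\alpha_1\rangle|$. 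Since $\widetilde\chi$ is invariant under $H\mapsto H+2\pi(\text{coroot lattice})$, I fix a representative $H$ with $|\langle\alpha_0,H\rangle|\approx\|\langle\alpha_0,H\rangle\|$ and $|\langle\alpha_1,H\rangle|\lesssim\|\langle\alpha_1,H\rangle\|$, so that $|H_0|\lesssim\|\langle\alpha_1,H\rangle\|$ in the relevant direction. By the mean value theorem along the segment from $\mu$ to $s_{\alpha_1}\mu$, the $\alpha_1$-derivative of $F$ has two pieces:
\begin{itemize}[label={}]
\item differentiating the exponential gives a factor $\langle\alpha_1,H_0\rangle$, of size $\lesssim\|\langle\alpha_1,H\rangle\|$, times $|F|\lesssim|\mu|$;
\item differentiating the quotient gives $\cos(\cdot)\,\langle\alpha_1,H^0\rangle/\sin(\langle\alpha_0,H\rangle/2)$, which is $O(1)$ because $H^0$ is parallel to $\alpha_0$ with $|H^0|\approx|\langle\alpha_0,H\rangle|$.
\end{itemize}
Hence
$$|F(\mu)-F(s_{\alpha_1}\mu)|\lesssim|\langle\mu,\alpha_1\rangle|\bigl(|\mu|\,\|\langle\alpha_1,H\rangle\|+1\bigr).$$
Multiplying by the prefactor, the first part gives $|\mu|\,|\langle\mu,\alpha_1\rangle|\,\|\langle\alpha_2,H\rangle\|^{-1}$, which is the first contribution in this regime. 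The second part gives $|\langle\mu,\alpha_1\rangle|\,\|\langle\alpha_1,H\rangle\|^{-1}\|\langle\alpha_2,H\rangle\|^{-1}$, which is the second contribution.

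The main obstacle is this cancellation step. One needs the bound $|F|\lesssim|\mu|$ uniformly along the non-lattice segment, and the derivative bound on $\sin\langle\nu,H^0\rangle/\sin(\langle\alpha_0,H\rangle/2)$ must not lose a factor of $\|\langle\alpha_0,H\rangle\|^{-1}$. I would justify both through the identity $\sin(x\theta)/\sin\theta$ with $\theta$ small. The signs $\det s_0$ and the orientation of the pairings must be checked so that the two terms genuinely appear as a difference $F(\mu)-F(s_{\alpha_1}\mu)$ rather than a sum.
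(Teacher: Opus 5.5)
Your decomposition is the paper's. You use descent at $j=0$, bound the $s_{\alpha_1}s_{\alpha_0}$ term by its rank-one factor $\lesssim|\langle\mu,\alpha_1\rangle|$, use the trivial bound when $|\langle\mu,\alpha_1\rangle|\,\|\langle\alpha_1,H\rangle\|\gtrsim 1$, and otherwise exploit cancellation in $F(\mu)-F(s_{\alpha_1}\mu)$. The signs do give a difference, since $\det s_{\alpha_1}=-1$, and your mean value estimate is a continuous version of the paper's split into $J^0_1$ and $J^0_2$.

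\textbf{The gap: the normalization of $H$ fails as written.} Translating by $2\pi$ times the coroot lattice (the kernel of $\exp$) shifts $\bigl(\langle\alpha_1,H\rangle,\langle\alpha_0,H\rangle\bigr)$ only by $2\pi(a,b)$ with $a\equiv b \pmod 3$. So you cannot in general bring both pairings into $[-\pi,\pi]$. Take $\langle\alpha_0,H\rangle=2\pi-\epsilon$ with $0<\epsilon\ll|\mu|^{-1}$ and $\langle\alpha_1,H\rangle=10|\mu|^{-1}$. This is a Case 2 configuration with $\exp H$ near a nontrivial central element.
\begin{itemize}
\item If you leave $\langle\alpha_0,H\rangle$ near $2\pi$, then $\sin(\langle\alpha_0,H\rangle/2)$ is of size $\epsilon$. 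Meanwhile $\sin\langle\nu,H^0\rangle$ and $\langle\alpha_1,H^0\rangle\approx-\pi$ are of size $1$ at generic points of the segment. Both $|F|\lesssim|\mu|$ and your $O(1)$ bound on the derivative of the quotient fail there.
\item If you shift $\langle\alpha_0,H\rangle$ near $0$, then $\langle\alpha_1,H\rangle$ is forced near a nonzero multiple of $2\pi$. Hence $|\langle\alpha_1,H_0\rangle|\gtrsim 1$, and the exponential part of your derivative bound loses a factor $\|\langle\alpha_1,H\rangle\|^{-1}$, which here is $\approx|\mu|$.
\end{itemize}
The underlying problem is that the mean value theorem along a continuous segment does not see that $n=2\langle\mu,\alpha_1\rangle/|\alpha_1|^2$ is an integer. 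That integrality is what makes $2\pi$-shifts of $\langle\alpha_1,H_0\rangle$ harmless.

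\textbf{The repair is short.} Let $X$ be in the coweight lattice, i.e.\ $\langle\alpha,X\rangle\in\mathbb{Z}$ for all roots. Then $\langle s\mu-\mu,X\rangle\in\mathbb{Z}$ because $s\mu-\mu$ lies in the root lattice. The Weyl character formula therefore gives $\widetilde\chi(\mu,H+2\pi X)=e^{2\pi i\langle\mu-\rho,X\rangle}\widetilde\chi(\mu,H)$, and every $\|\langle\alpha,H\rangle\|$ is unchanged. Since $\{\alpha_1,\alpha_0\}$ is a basis of the root lattice, these translations (the action of the center of $\mathrm{SU}(3)$) shift the two pairings by arbitrary multiples of $2\pi$. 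Only $|\widetilde\chi|$ is being bounded, so you may put both $\langle\alpha_0,H\rangle$ and $\langle\alpha_1,H\rangle$ in $[-\pi,\pi]$, and then your argument goes through.

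The paper avoids any global normalization. It carries the sign $(-1)^{\delta_0}=(-1)^{k_0 n}$, which is exactly the phase $e^{i\pi k_0 n}$ produced by the term $\tfrac12\langle\alpha_0,H\rangle$ inside $\langle\alpha_1,H_0\rangle$. It then reduces $\langle\alpha_1,H\rangle$ modulo $2\pi$ only where needed, using the integrality of $n$.
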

\begin{proof}
It suffices to prove that 
\begin{align*}
|\widetilde{\chi}(\mu, H)|&\lesssim 
\prod_{\alpha\in\widetilde{\Phi}}\min\{|\langle\mu,\alpha\rangle|,\|\langle\alpha,H\rangle\|^{-1}\}.
\end{align*}
As $\|\langle\alpha_0, H\rangle\|^{-1}\geq|\mu|\approx|\langle\mu,\alpha_0\rangle|$ and $\|\langle\alpha_2, H\rangle\|^{-1}\leq|\mu|\approx|\langle\mu,\alpha_2\rangle|$, the above bound becomes 
\begin{align}\label{eq: case 2 bound}
|\widetilde{\chi}(\mu, H)|
&\lesssim \frac{1}{\|\langle\alpha_2, H\rangle\|}|\mu|\min\{|\langle\mu,\alpha_1\rangle|,\|\langle\alpha_1, H\rangle\|^{-1}\}.
\end{align}

\textbf{Case 2a}. $|\langle\mu,\alpha_1\rangle|\geq \|\langle\alpha_1, H\rangle\|^{-1}$. The desired bound \eqref{eq: case 2 bound} becomes 
$$|\widetilde{\chi}(\mu, H)|\lesssim \frac{1}{\|\langle\alpha_2, H\rangle\|\cdot \|\langle\alpha_1, H\rangle\|}|\mu|.$$
Specializing the character descent formula \eqref{eq: cha} to $j=0$, we have 
$$|\widetilde{\chi}(\mu, H)|\lesssim \frac{1}{\|\langle\alpha_2, H\rangle\|\cdot\|\langle\alpha_1, H\rangle\|}\sum_{s_0\in W_0}|\widetilde{\chi}^0({(s_0\mu)^0}, H^0)|.$$
Apply \eqref{eq: rank one bd}, we obtain 
$$|\widetilde{\chi}^0({(s_0\mu)^0}, H^0)|\lesssim |(s_0\mu)^0|\lesssim |\mu|,$$
which concludes the proof of \eqref{eq: case 2 bound}. 

\textbf{Case 2b}. $|\langle\mu,\alpha_1\rangle|\leq \|\langle\alpha_1, H\rangle\|^{-1}$. The desired bound \eqref{eq: case 2 bound} becomes 
$$|\widetilde{\chi}(\mu, H)|\lesssim \frac{1}{\|\langle\alpha_2, H\rangle\|}|\mu|\cdot  |\langle\mu,\alpha_1\rangle|.$$
Specialize \eqref{eq: cha} to $j=0$ again, we obtain 
$$
   \widetilde{\chi}(\mu, H)=-\frac{1}{4}\sum_{s_0\in W_0}I^0_{s_0},
   $$
   where 
   $$
   I^0_{s_0}=
   \frac{1}{\sin\left(\frac{\langle\alpha_1, H\rangle}{2}\right)\sin\left(\frac{\langle\alpha_2, H\rangle}{2}\right)}(\det s_0) 
e^{i\langle s_0\mu, H_{0}\rangle}\widetilde{\chi}^0({(s_0\mu)^0}, H^0).
$$
Recall that $W_0=\{e,s_{\alpha_1},s_{\alpha_1}s_{\alpha_0}\}$. 
We first bound $I^0_{s_{\alpha_1}s_{\alpha_0}}$, and then exploit cancellation between $I^0_{e}$ and $I^0_{s_{\alpha_1}}$ to bound their sum $I^0_e+I^0_{s_{\alpha_1}}$. 

Observe that 
\begin{align*}
(s_{\alpha_1}s_{\alpha_0}\mu)^0=\frac{\langle s_{\alpha_1}s_{\alpha_0}\mu, \alpha_0  \rangle}{|\alpha_0|^2}  \alpha_0
    =\frac{\langle \mu, s_{\alpha_0}s_{\alpha_1}\alpha_0  \rangle}{|\alpha_0|^2}  \alpha_0
    =\frac{\langle \mu, \alpha_1  \rangle}{|\alpha_0|^2}  \alpha_0.
\end{align*}
Applying \eqref{eq: rank one bd}, the above identity implies 
$$|\widetilde{\chi}^0({(s_{\alpha_1}s_{\alpha_0}\mu)^0}, H^0)|\lesssim |\langle \mu, \alpha_1  \rangle|.$$
This combined with the assumption that $\|\langle\alpha_1, H\rangle\|\geq |\mu|^{-1}$ yields the desired bound for $I^0_{s_{\alpha_1}s_{\alpha_0}}$:
$$|I^0_{s_{\alpha_1}s_{\alpha_0}}|\lesssim \frac{1}{\|\langle\alpha_2, H\rangle\|}|\mu|\cdot |\langle \mu, \alpha_1  \rangle|.$$

It suffices to get the same bound for $I^0_{e}+I^0_{s_{\alpha_1}}$. Write 
\begin{align}\label{eq: alpha0H}
    \langle\alpha_0, H\rangle=2\pi k_0+f_0(H)
\end{align}
where $k_0\in\mathbb{Z}$, such that 
\begin{align}\label{eq: f0 case 2}
    |f_0(H)|\approx \|\langle\alpha_0, H\rangle\|\leq |\mu|^{-1}.
\end{align}

As $\mu\in\Lambda$, $2\langle\mu,\alpha_1\rangle/|\alpha_1|^2$ is an integer. 
The argument below will depend on the parity of both $k_0$ and $2\langle\mu,\alpha_1\rangle/|\alpha_1|^2$. Let 
\begin{align}\label{eq: def delta case 2}
    \delta_0=\left\{\begin{array}{ll}
    1, & \text{ if both }k_0 \text{ and } 2\frac{\langle\mu,\alpha_1\rangle}{|\alpha_1|^2}\text{ are odd}, \\
    0, & \text{ otherwise}.
\end{array}\right.
\end{align}

We have 
\begin{align}
    I^0_{e}+I^0_{s_{\alpha_1}}=&\frac{1}{\sin\left(\frac{\langle\alpha_1, H\rangle}{2}\right)\sin\left(\frac{\langle\alpha_2, H\rangle}{2}\right)}\nonumber\\
    &\cdot 
    \left(e^{i\langle \mu, H_{0}\rangle}\widetilde{\chi}^0({\mu^0}, H^0)-e^{i\langle s_{\alpha_1}\mu, H_{0}\rangle}\widetilde{\chi}^0({(s_{\alpha_1}\mu)^0}, H^0)\right)\nonumber\\
    =&J^0_1+J^0_2,\label{eq: cancellation case 2}
    \end{align}
    where 
     $$J^0_1=\frac{1}{ \sin\left(\frac{\langle\alpha_1, H\rangle}{2}\right)\sin\left(\frac{\langle\alpha_2, H\rangle}{2}\right)}\left(e^{i\langle \mu, H_{0}\rangle}-(-1)^{\delta_0} e^{i\langle s_{\alpha_1}\mu, H_{0}\rangle}\right)\widetilde{\chi}^0({\mu^0}, H^0),$$
    $$J^0_2=\frac{1}{ \sin\left(\frac{\langle\alpha_1, H\rangle}{2}\right)\sin\left(\frac{\langle\alpha_2, H\rangle}{2}\right)}
    e^{i\langle s_{\alpha_1}\mu, H_{0}\rangle}\left((-1)^{\delta_0}\widetilde{\chi}^0({\mu^0}, H^0)-\widetilde{\chi}^0({(s_{\alpha_1}\mu)^0}, H^0)\right).$$
We treat the desired bounds for $J^0_1$ and $J^0_2$ in Propositions \ref{prop: J1} and \ref{prop: J2} respectively, and this finishes the proof of the main proposition.  
\end{proof}

    \begin{prop}\label{prop: J1}
    Assume that $\|\langle\alpha_0, H\rangle\|\leq|\mu|^{-1}\leq  \|\langle\alpha_2, H\rangle\|\leq \|\langle\alpha_1, H\rangle\|$ and $| \langle\mu,\alpha_1\rangle|\lesssim |\mu|\approx |\langle\mu,\alpha_2\rangle|\approx |\langle\mu,\alpha_0\rangle|$.  Then 
        $$|J^0_1|\lesssim \frac{1}{\|\langle\alpha_2, H\rangle\|}|\mu|\cdot |\langle \mu, \alpha_1  \rangle|.$$
    \end{prop}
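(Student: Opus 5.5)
The plan is to bound the three factors of $J^0_1$ separately: the prefactor, the rank-one character $\widetilde{\chi}^0(\mu^0,H^0)$, and the bracket $e^{i\langle \mu, H_{0}\rangle}-(-1)^{\delta_0} e^{i\langle s_{\alpha_1}\mu, H_{0}\rangle}$. The bracket is where the cancellation lives, and it should supply exactly a factor $|\langle\mu,\alpha_1\rangle|\cdot\|\langle\alpha_1,H\rangle\|$.

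\textbf{Step 1: the easy factors.} The prefactor has modulus $(\|\langle\alpha_1,H\rangle\|\,\|\langle\alpha_2,H\rangle\|)^{-1}$. By \eqref{eq: rank one bd}, $|\widetilde{\chi}^0(\mu^0,H^0)|\lesssim |\mu^0|\lesssim|\mu|$. It therefore suffices to show
\[
\bigl|e^{i\langle \mu, H_{0}\rangle}-(-1)^{\delta_0} e^{i\langle s_{\alpha_1}\mu, H_{0}\rangle}\bigr|\lesssim |\langle\mu,\alpha_1\rangle|\cdot\|\langle\alpha_1,H\rangle\|.
\]

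\textbf{Step 2: rewrite the bracket.} Set $m=2\langle\mu,\alpha_1\rangle/|\alpha_1|^2\in\mathbb{Z}$, so that $s_{\alpha_1}\mu-\mu=-m\alpha_1$. The bracket then has modulus $|1-(-1)^{\delta_0}e^{-im\langle\alpha_1,H_0\rangle}|$.

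Next I compute $\langle\alpha_1,H_0\rangle=\langle\alpha_1,H\rangle-\frac{\langle\alpha_1,\alpha_0\rangle}{|\alpha_0|^2}\langle\alpha_0,H\rangle$. Since $\langle\alpha_1,\alpha_0\rangle=-\tfrac12|\alpha_0|^2$ for $\mathrm{SU}(3)$, this gives $\langle\alpha_1,H_0\rangle=\langle\alpha_1,H\rangle+\tfrac12\langle\alpha_0,H\rangle$. Inserting \eqref{eq: alpha0H} yields
\[
m\langle\alpha_1,H_0\rangle=m\langle\alpha_1,H\rangle+m\pi k_0+\tfrac{m}{2}f_0(H).
\]
The middle term contributes $e^{-im\pi k_0}=(-1)^{mk_0}$, and this is precisely $(-1)^{\delta_0}$ by the definition \eqref{eq: def delta case 2}. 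This is the point of the choice of $\delta_0$: the sign is absorbed exactly. With $\theta=\langle\alpha_1,H\rangle+\tfrac12 f_0(H)$, the bracket has modulus
\[
|1-e^{-im\theta}|=2|\sin(m\theta/2)|.
\]

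\textbf{Step 3: the key estimate.} Use the elementary inequality $|\sin(mx)|\le|m|\,|\sin x|$, valid for all $m\in\mathbb{Z}$ (by induction, or from \eqref{eq: rank one}). This gives $2|\sin(m\theta/2)|\le 2|m|\,|\sin(\theta/2)|$. Then
\[
|\sin(\theta/2)|\le\|\langle\alpha_1,H\rangle\|+|f_0(H)|/4 .
\]
By \eqref{eq: f0 case 2}, $|f_0(H)|\lesssim|\mu|^{-1}\le\|\langle\alpha_1,H\rangle\|$, using the Case 2 hypothesis. Hence the bracket is $\lesssim|m|\,\|\langle\alpha_1,H\rangle\|\approx|\langle\mu,\alpha_1\rangle|\,\|\langle\alpha_1,H\rangle\|$. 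Combined with Step 1, this gives the claimed bound on $|J^0_1|$.

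The only delicate point is the sign bookkeeping in Step 2. One must check that the parity of $mk_0$ matches $\delta_0$; it does, since $mk_0$ is odd iff both $m$ and $k_0$ are odd. One must also check that the inner-product constant $-\tfrac12$ makes the $k_0$ contribution exactly $m\pi k_0$, so that no stray phase survives.
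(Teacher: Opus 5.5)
Your proposal is correct and follows the paper's proof essentially step for step: the same reduction to the bracket, the same identity $\langle\alpha_1,H^0\rangle=-\tfrac12\langle\alpha_0,H\rangle$ with the resulting sign $(-1)^{mk_0}$ absorbed exactly by $\delta_0$, and the same final use of $|f_0(H)|\lesssim|\mu|^{-1}\le\|\langle\alpha_1,H\rangle\|$. The only difference is in the last elementary step: you use $|\sin(mx)|\le|m|\,|\sin x|$, whereas the paper reduces $\langle\alpha_1,H\rangle$ modulo $2\pi$ into $[-\pi,\pi]$ and then applies $|\sin x|\le|x|$. Both arguments yield the same bound, and yours avoids the periodicity reduction.
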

   
\begin{proof}
Noting that by \eqref{eq: rank one bd} we have $|\widetilde{\chi}^0({\mu^0}, H^0)|\lesssim|\mu^0|\lesssim |\mu|$, it suffices to prove  
\begin{align}\label{eq: proving J1}
    \left|e^{i\langle \mu, H_{0}\rangle}-(-1)^\delta e^{i\langle s_{\alpha_1}\mu, H_{0}\rangle}\right|
    \lesssim |\langle\mu,\alpha_1\rangle |\cdot \|\langle\alpha_1,H\rangle\|.
\end{align}
Observe that 
\begin{align}\label{eq: simple difference}
    \mu-s_{\alpha_1}\mu=2\frac{\langle\mu,\alpha_1\rangle}{|\alpha_1|^2}\alpha_1.
\end{align}
Then 
\begin{align*}
  \left|e^{i\langle \mu, H_{0}\rangle}-(-1)^{\delta_0} e^{i\langle s_{\alpha_1}\mu, H_{0}\rangle}\right|
&=\left|e^{i\langle \mu, H_{0}\rangle-i\langle s_{\alpha_1}\mu, H_{0}\rangle+i{\delta_0}\pi }-1\right|\\
&=\left|e^{i2\frac{\langle\mu,\alpha_1\rangle}{|\alpha_1|^2}\langle \alpha_1, H_{0}\rangle+i{\delta_0}\pi }-1\right|\\
&=2\left|\sin\left(\frac{\langle\mu,\alpha_1\rangle}{|\alpha_1|^2}\langle \alpha_1, H_{0}\rangle+\frac{{\delta_0} \pi}{2}\right)\right|.
\end{align*}
Recalling \eqref{eq: alpha0H}, we have 
$$\langle\alpha_1,H^0\rangle=\frac{\langle\alpha_0, H\rangle}{|\alpha_0|^2}\langle\alpha_1,\alpha_0\rangle=-\frac{1}{2}\langle\alpha_0, H\rangle=-\pi k_0-\frac12 f_0(H).$$
Thus 
\begin{align*}
   -\frac{\langle\mu,\alpha_1\rangle}{|\alpha_1|^2}\langle \alpha_1, H^{0}\rangle+\frac{{\delta_0} \pi}{2}
    &=\frac{\pi}{2}\left({\delta_0}+ \frac{2\langle\mu,\alpha_1\rangle }{|\alpha_1|^2}k_0\right)+ \frac{\langle\mu,\alpha_1\rangle}{2|\alpha_1|^2}f_0(H).
\end{align*}
Recalling from the definition \eqref{eq: def delta case 2} of ${\delta_0}$ that 
${\delta_0}+ \frac{2\langle\mu,\alpha_1\rangle }{|\alpha_1|^2}k_0$ is always an even number, the above identity implies
\begin{align*}
    &\left|\sin\left(\frac{\langle\mu,\alpha_1\rangle}{|\alpha_1|^2}\langle \alpha_1, H_{0}\rangle+\frac{{\delta_0}\pi}{2}\right)\right|\\
    &=\left|\sin\left(\frac{\langle\mu,\alpha_1\rangle}{|\alpha_1|^2}\langle \alpha_1, H\rangle-  \frac{\langle\mu,\alpha_1\rangle}{|\alpha_1|^2}\langle \alpha_1, H^{0}\rangle +\frac{{\delta_0} \pi}{2}\right)\right|\\
    &=\left|\sin\left(\frac{\langle\mu,\alpha_1\rangle}{|\alpha_1|^2}\langle \alpha_1, H\rangle+ \frac{\langle\mu,\alpha_1\rangle}{2|\alpha_1|^2}f_0(H)\right)\right|.
\end{align*}
Noting that $\langle\mu,\alpha_1\rangle/|\alpha_1|^2\in \frac12\mathbb{Z}$, in the evaluation of the above quantity we may assume 
$\langle \alpha_1, H\rangle\in[-\pi,\pi]$ and thus $|\langle \alpha_1, H\rangle|\approx\|\langle \alpha_1, H\rangle\|$.  
Then 
\begin{align*}
    \left|\sin\left(\frac{\langle\mu,\alpha_1\rangle}{|\alpha_1|^2}\langle \alpha_1, H_{0}\rangle+\frac{\delta_0 \pi}{2}\right)\right|\lesssim 
|\langle\mu,\alpha_1\rangle|\cdot \|\langle \alpha_1, H\rangle\|+ |\langle\mu,\alpha_1\rangle|\cdot |f_0(H)|.
\end{align*}
By \eqref{eq: f0 case 2}, we have $|f_0(H)|\lesssim |\mu|^{-1}\leq \|\langle\alpha_1,H\rangle\|$, and thus 
\begin{align*}
    \left|\sin\left(\frac{\langle\mu,\alpha_1\rangle}{|\alpha_1|^2}\langle \alpha_1, H_{0}\rangle+\frac{\delta_0 \pi}{2}\right)\right|
    \lesssim |\langle\mu,\alpha_1\rangle |\cdot \|\langle\alpha_1,H\rangle\|.
\end{align*}
We have verified \eqref{eq: proving J1} and thus finished the proof.     
\end{proof}

    \begin{prop}\label{prop: J2}
    Assume that $\|\langle\alpha_0, H\rangle\|\leq|\mu|^{-1}\leq  \|\langle\alpha_2, H\rangle\|\leq \|\langle\alpha_1, H\rangle\|$ and $| \langle\mu,\alpha_1\rangle|\lesssim |\mu|\approx |\langle\mu,\alpha_2\rangle|\approx |\langle\mu,\alpha_0\rangle|$. Then 
        $$|J^0_2|\lesssim \frac{1}{\|\langle\alpha_2, H\rangle\|}|\mu|\cdot |\langle \mu, \alpha_1  \rangle|.$$
    \end{prop}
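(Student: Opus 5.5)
The plan is to reduce the statement to a bound on the difference of the two rank-one characters, and to obtain that bound by an exact computation in which the parity sign $(-1)^{\delta_0}$ cancels. The prefactor of $J^0_2$ has modulus $\lesssim (\|\langle\alpha_1,H\rangle\|\cdot\|\langle\alpha_2,H\rangle\|)^{-1}$, and the exponential has modulus one. Since $\|\langle\alpha_1,H\rangle\|^{-1}\leq|\mu|$ by hypothesis, it suffices to prove
\begin{align*}
\left|(-1)^{\delta_0}\widetilde{\chi}^0(\mu^0,H^0)-\widetilde{\chi}^0((s_{\alpha_1}\mu)^0,H^0)\right|\lesssim|\langle\mu,\alpha_1\rangle|.
\end{align*}
In words: the two rank-one characters should agree, up to the sign, to within $|\langle\mu,\alpha_1\rangle|$, and no smallness in $H$ is needed.

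First I compute $(s_{\alpha_1}\mu)^0$ explicitly. Set $c=2\langle\mu,\alpha_1\rangle/|\alpha_1|^2\in\mathbb{Z}$. By \eqref{eq: simple difference}, $s_{\alpha_1}\mu=\mu-c\alpha_1$. Since $\langle\alpha_1,\alpha_0\rangle=-\tfrac12|\alpha_0|^2$, we have $\alpha_1^0=-\tfrac12\alpha_0$, and therefore
\begin{align*}
(s_{\alpha_1}\mu)^0=\mu^0+\tfrac{c}{2}\alpha_0 .
\end{align*}

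Next I use \eqref{eq: alpha0H}, which gives $\langle\alpha_0,H^0\rangle=\langle\alpha_0,H\rangle=2\pi k_0+f_0(H)$. Then
\begin{align*}
\langle(s_{\alpha_1}\mu)^0,H^0\rangle=\langle\mu^0,H^0\rangle+c\pi k_0+\tfrac{c}{2}f_0(H).
\end{align*}
By the definition \eqref{eq: def delta case 2}, $(-1)^{ck_0}=(-1)^{\delta_0}$. Hence $\sin\langle(s_{\alpha_1}\mu)^0,H^0\rangle=(-1)^{\delta_0}\sin\bigl(a+\tfrac c2 f_0(H)\bigr)$, where $a=\langle\mu^0,H^0\rangle$. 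The two terms share the denominator $\sin(\langle\alpha_0,H\rangle/2)=\pm\sin(f_0(H)/2)$. Thus the difference equals, up to sign,
\begin{align*}
\frac{\sin a-\sin\bigl(a+\tfrac c2 f_0(H)\bigr)}{\sin(f_0(H)/2)}.
\end{align*}

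Finally, the mean value theorem bounds the numerator by $\tfrac{|c|}{2}|f_0(H)|$. Since $|f_0(H)|\leq\pi$, we have $|\sin(f_0(H)/2)|\approx|f_0(H)|$, so the quotient is $\lesssim|c|\approx|\langle\mu,\alpha_1\rangle|$. In the degenerate case $f_0(H)=0$ the bound follows by continuity in $H$, or directly from the derivative.

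The only delicate step is the sign bookkeeping: one has to check that the shift $c\pi k_0$ produces exactly the factor $(-1)^{\delta_0}$ chosen in the splitting \eqref{eq: cancellation case 2}. This is precisely why $\delta_0$ was defined through the parities of $k_0$ and $c$. After that, the cancellation is a one-line Lipschitz estimate.

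Combining this with the prefactor bound gives
\begin{align*}
|J^0_2|\lesssim\frac{|\langle\mu,\alpha_1\rangle|}{\|\langle\alpha_1,H\rangle\|\cdot\|\langle\alpha_2,H\rangle\|}\leq\frac{|\mu|\cdot|\langle\mu,\alpha_1\rangle|}{\|\langle\alpha_2,H\rangle\|},
\end{align*}
which is the claimed bound.
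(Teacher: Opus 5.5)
Your proposal is correct and follows essentially the same route as the paper: you use $\|\langle\alpha_1,H\rangle\|^{-1}\leq|\mu|$ to reduce to bounding $(-1)^{\delta_0}\widetilde{\chi}^0(\mu^0,H^0)-\widetilde{\chi}^0((s_{\alpha_1}\mu)^0,H^0)$, absorb the shift $c\pi k_0$ through the parity definition of $\delta_0$, and bound the remaining sine difference by $\lesssim|c|\,|f_0(H)|$ against the denominator $|\sin(f_0(H)/2)|\approx|f_0(H)|$. The only difference is cosmetic: the paper applies $|\sin x-\sin y|\leq 2|\sin\frac{x-y}{2}|$, whereas you first extract the sign $(-1)^{ck_0}$ and then use the mean value theorem.
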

    \begin{proof}
        It suffices to prove that 
        \begin{align}\label{eq: proving J2}
            \left|(-1)^{\delta_0}\widetilde{\chi}^0({\mu^0}, H^0)-\widetilde{\chi}^0({(s_{\alpha_1}\mu)^0}, H^0)\right|
            \lesssim |\langle\mu,\alpha_1\rangle|.
        \end{align}
We have 
\begin{align*}
 &\left|(-1)^{\delta_0}\widetilde{\chi}^0({\mu^0}, H^0)-\widetilde{\chi}^0({(s_{\alpha_1}\mu)^0}, H^0)\right|\\
 &=\frac1{\|\langle\alpha_0,H\rangle\|}\left|\sin(\langle\mu^0,H^0\rangle+{\delta_0}\pi)-\sin(\langle (s_{\alpha_1}\mu)^0,H^0\rangle)\right|\\
 &\leq \frac2{\|\langle\alpha_0,H\rangle\|}\left|\sin\left(\frac12(\langle\mu^0,H^0\rangle+{\delta_0}\pi-\langle (s_{\alpha_1}\mu)^0,H^0\rangle)\right)\right|.
\end{align*}
Now compute
$$\langle\mu^0,H^0\rangle=\frac{\langle\mu,\alpha_0\rangle}{|\alpha_0|^2}\langle\alpha_0,H\rangle,$$
$$\langle (s_{\alpha_1}\mu)^0,H^0\rangle =\frac{\langle s_{\alpha_1}\mu,\alpha_0\rangle}{|\alpha_0|^2}\langle\alpha_0,H\rangle,$$
which together imply, using \eqref{eq: simple difference} and \eqref{eq: alpha0H}, that  
\begin{align*}
    \langle\mu^0,H^0\rangle-\langle (s_{\alpha_1}\mu)^0,H^0\rangle &=
2\frac{\langle\mu,\alpha_1\rangle}{|\alpha_1|^2|\alpha_0|^2}\langle\alpha_1,\alpha_0\rangle\langle\alpha_0,H\rangle\\
&=-\frac{\langle\mu,\alpha_1\rangle}{|\alpha_1|^2}\langle\alpha_0,H\rangle\\
&=-\frac{\langle\mu,\alpha_1\rangle}{|\alpha_1|^2}2\pi k_0-\frac{\langle\mu,\alpha_1\rangle}{|\alpha_1|^2}f_0(H).
\end{align*}
Then 
\begin{align*}
   &\left|\sin\left(\frac12(\langle\mu^0,H^0\rangle+{\delta_0}\pi-\langle (s_{\alpha_1}\mu)^0,H^0\rangle)\right)\right|\\
   &=\left|\sin\left(\frac{\pi}{2}\left({\delta_0}-\frac{2\langle\mu,\alpha_1\rangle}{|\alpha_1|^2}k_0\right)-\frac{\langle\mu,\alpha_1\rangle}{2|\alpha_1|^2}f_0(H)\right)\right|\\
   &=\left|\sin\left(\frac{\langle\mu,\alpha_1\rangle}{2|\alpha_1|^2}f_0(H)\right)\right|,
\end{align*}
where in the last equality we used the fact that ${\delta_0}-\frac{2\langle\mu,\alpha_1\rangle}{|\alpha_1|^2}k_0$ is an even number.
Now we have  
\begin{align*}
 \left|(-1)^{\delta_0}\widetilde{\chi}^0({\mu^0}, H^0)-\widetilde{\chi}^0({(s_{\alpha_1}\mu)^0}, H^0)\right|
 &\leq \frac2{\|\langle\alpha_0,H\rangle\|}\left|\sin\left(\frac{\langle\mu,\alpha_1\rangle}{2|\alpha_1|^2}f_0(H)\right)\right|\\
 &\leq\frac2{\|\langle\alpha_0,H\rangle\|} \frac{|\langle\mu,\alpha_1\rangle|}{2|\alpha_1|^2}|f_0(H)|\\
 &\lesssim  |\langle\mu,\alpha_1\rangle|.
\end{align*}
Here in the last inequality we used \eqref{eq: f0 case 2}. 
This concludes the proof of \eqref{eq: proving J2} and thus the proposition. 
    \end{proof}

\section{Proof of \textbf{Case 3}}\label{sec: case 3}
\begin{prop}[\textbf{Case 3}]\label{prop: case 3}
    Assume that $|\mu|^{-1}\leq \|\langle\alpha_0,H\rangle\|\leq \|\langle\alpha_2,H\rangle\|\leq \|\langle\alpha_1,H\rangle\|$ and $| \langle\mu,\alpha_1\rangle|\lesssim |\mu|\approx |\langle\mu,\alpha_2\rangle|\approx |\langle\mu,\alpha_0\rangle|$. Then \eqref{eq: main bd} is true. 
\end{prop}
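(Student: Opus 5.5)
Since $\|\langle\alpha,H\rangle\|\geq|\mu|^{-1}$ for every $\alpha\in\widetilde{\Phi}$ and $|\langle\mu,\alpha_0\rangle|\approx|\langle\mu,\alpha_2\rangle|\approx|\mu|$, the minima in \eqref{eq: main bd} for $\alpha_0,\alpha_2$ are comparable to $\|\langle\alpha_0,H\rangle\|^{-1}$ and $\|\langle\alpha_2,H\rangle\|^{-1}$ respectively. So, keeping only the $s=e$ term on the right of \eqref{eq: main bd}, it suffices to prove
\begin{align*}
|\widetilde{\chi}(\mu,H)|\lesssim \frac{\min\{|\langle\mu,\alpha_1\rangle|,\|\langle\alpha_1,H\rangle\|^{-1}\}}{\|\langle\alpha_0,H\rangle\|\cdot\|\langle\alpha_2,H\rangle\|}.
\end{align*}
I will split into two subcases according to which term attains this minimum, as in Case 2.

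\textbf{Case 3a}: $|\langle\mu,\alpha_1\rangle|\geq\|\langle\alpha_1,H\rangle\|^{-1}$. Here no cancellation is needed. In the Weyl character formula \eqref{eq: Weyl} the numerator is bounded by $|W|=6$, and the denominator has absolute value $8\prod_{\alpha\in\widetilde{\Phi}}\|\langle\alpha,H\rangle\|$, since the positive roots are $\pm$ the extended simple roots. This gives $|\widetilde{\chi}(\mu,H)|\lesssim\prod_{\alpha\in\widetilde{\Phi}}\|\langle\alpha,H\rangle\|^{-1}$, which is the desired bound.

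\textbf{Case 3b}: $|\langle\mu,\alpha_1\rangle|\leq\|\langle\alpha_1,H\rangle\|^{-1}$. We need $|\widetilde{\chi}(\mu,H)|\lesssim|\langle\mu,\alpha_1\rangle|\,\|\langle\alpha_0,H\rangle\|^{-1}\|\langle\alpha_2,H\rangle\|^{-1}$. To get this I will extract cancellation directly from the numerator of \eqref{eq: Weyl}, grouping the terms by left cosets of the parabolic subgroup $\{e,s_{\alpha_1}\}$. By Lemma \ref{lem: parabolic}, $W=W_1'\{e,s_{\alpha_1}\}$ for a set $W_1'$ of three coset representatives $w$. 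Hence
\begin{align*}
\sum_{s\in W}(\det s)e^{i\langle s\mu,H\rangle}=\sum_{w\in W_1'}(\det w)\left(e^{i\langle w\mu,H\rangle}-e^{i\langle ws_{\alpha_1}\mu,H\rangle}\right).
\end{align*}
By \eqref{eq: simple difference}, $w\mu-ws_{\alpha_1}\mu=c\,w\alpha_1$ with $c=2\langle\mu,\alpha_1\rangle/|\alpha_1|^2\in\mathbb{Z}$. So each bracket has absolute value $2|\sin(c\langle w\alpha_1,H\rangle/2)|$. Using the elementary inequality $|\sin(n\theta)|\leq|n|\,|\sin\theta|$ for $n\in\mathbb{Z}$, each bracket is bounded by
\begin{align*}
2|c|\,\|\langle w\alpha_1,H\rangle\|\lesssim|\langle\mu,\alpha_1\rangle|\,\|\langle w\alpha_1,H\rangle\|.
\end{align*}
Now $w\alpha_1$ is a root, hence $\pm$ an element of $\widetilde{\Phi}$. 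By the ordering \eqref{eq: assumption on H}, $\|\langle w\alpha_1,H\rangle\|\leq\|\langle\alpha_1,H\rangle\|$. Therefore the numerator is $\lesssim|\langle\mu,\alpha_1\rangle|\,\|\langle\alpha_1,H\rangle\|$. Dividing by the denominator $8\prod_{\alpha\in\widetilde{\Phi}}\|\langle\alpha,H\rangle\|$ cancels the factor $\|\langle\alpha_1,H\rangle\|$ and gives exactly the target bound.

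The step I expect to be the crux is seeing that the cancellation in Case 3b should come from pairing by the coset of $s_{\alpha_1}$, so that the small parameter $\langle\mu,\alpha_1\rangle$ appears in every pair. The alternative is the descent formula \eqref{eq: cha} with $j=1$. There the $e$ term is harmless, but the two remaining terms are individually too large and would have to be paired, with a parity bookkeeping like $\delta_0$ in Case 2. The bound above avoids this, because the prefactor in the Weyl formula already supplies $\|\langle\alpha_1,H\rangle\|^{-1}$, and the ordering assumption makes $\alpha_1$ the root with the largest $\|\cdot\|$. The point to double-check is that this ordering, and not a bound on $\|\langle\alpha_1,H\rangle\|$ itself, is what controls $\|\langle w\alpha_1,H\rangle\|$ uniformly over the three cosets.
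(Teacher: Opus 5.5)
Your proof is correct, and in Case 3b it takes a genuinely different and shorter route than the paper.

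The paper specializes the descent formula \eqref{eq: cha} to $j=1$. This amounts to pairing $v$ with $s_{\alpha_1}v$ for $v\in W_1=\{e,s_{\alpha_2},s_{\alpha_2}s_{\alpha_1}\}$. The $v=e$ term is handled by the rank-one bound. For the other two, the pair difference carries the large coefficient $\langle v\mu,\alpha_1\rangle\in\{-\langle\mu,\alpha_0\rangle,\langle\mu,\alpha_2\rangle\}$. So the paper has to combine $I^1_{s_{\alpha_2}}$ with $I^1_{s_{\alpha_2}s_{\alpha_1}}$, split $H=H_1+H^1$, and track the parity $\delta_1$ through $J^1_1$ and $J^1_2$.

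You instead pair $w$ with $ws_{\alpha_1}$, so every pair differs by $c\,w\alpha_1$ with the small integer $c=2\langle\mu,\alpha_1\rangle/|\alpha_1|^2$. The paper's identity \eqref{eq: simple difference case 3} is exactly your pair for $w=s_{\alpha_2}$, reached there only indirectly. Then $|\sin(n\theta)|\le|n|\,|\sin\theta|$ and the maximality of $\|\langle\alpha_1,H\rangle\|$ close the argument with no parity bookkeeping. Your use of the ordering \eqref{eq: assumption on H} to control all three $\|\langle w\alpha_1,H\rangle\|$ at once is exactly right.

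What the paper's route buys is uniformity with Case 2. There $\|\langle\alpha_0,H\rangle\|$ may be far below $|\mu|^{-1}$, and your one-step pairing would only give $|\langle\mu,\alpha_1\rangle|/(\|\langle\alpha_0,H\rangle\|\,\|\langle\alpha_2,H\rangle\|)$, which is too weak. The paper instead uses the rank-one factor $\widetilde{\chi}^0$ of the descent formula to absorb the small $\sin(\langle\alpha_0,H\rangle/2)$. Treating Case 3 with the same machinery keeps the two arguments parallel and fits the descent-based framework the author expects to generalize.
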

\begin{proof}
Again it suffices to prove that 
\begin{align*}
|\widetilde{\chi}(\mu, H)|&\lesssim 
\prod_{\alpha\in\widetilde{\Phi}}\min\{|\langle\mu,\alpha\rangle|,\|\langle\alpha,H\rangle\|^{-1}\}.
\end{align*}
As $\|\langle\alpha_0, H\rangle\|^{-1}\leq |\mu|\approx|\langle\mu,\alpha_0\rangle|$ and $\|\langle\alpha_2, H\rangle\|^{-1}\leq|\mu|\approx|\langle\mu,\alpha_2\rangle|$, the above bound becomes 
\begin{align}\label{eq: case 3 bd}
    |\widetilde{\chi}(\mu, H)|\lesssim \frac{1}{\|\langle\alpha_2,H\rangle\|\cdot \|\langle\alpha_0,H\rangle\|}\min\{|\langle\mu,\alpha_1\rangle|,\|\langle\alpha_1,H\rangle\|^{-1}\}.
\end{align}

\textbf{Case 3a}. $|\langle\mu,\alpha_1\rangle|\geq\|\langle\alpha_1,H\rangle\|^{-1}$. The desired bound \eqref{eq: case 3 bd} becomes 
$$|\widetilde{\chi}(\mu, H)|\lesssim \frac{1}{\|\langle\alpha_2,H\rangle\|\cdot\|\langle\alpha_0,H\rangle\|\cdot\|\langle\alpha_1,H\rangle\|}.$$
This is a direct consequence of the Weyl character formula \eqref{eq: Weyl}. 

\textbf{Case 3b}. $|\langle\mu,\alpha_1\rangle|\leq\|\langle\alpha_1,H\rangle\|^{-1}$. 
 The desired bound \eqref{eq: case 3 bd} becomes 
$$|\widetilde{\chi}(\mu, H)|\lesssim \frac{1}{\|\langle\alpha_2,H\rangle\|\cdot\|\langle\alpha_0,H\rangle\|}|\langle\mu,\alpha_1\rangle|.$$
   Specializing \eqref{eq: cha} to $j=1$, we obtain
   $$
   \widetilde{\chi}(\mu, H)=-\frac{1}{4}\sum_{s_1\in W_1}I^1_{s_1},
   $$
   where 
   $$
   I^1_{s_1}=
   \frac{1}{\sin\left(\frac{\langle\alpha_2, H\rangle}{2}\right)\sin\left(\frac{\langle\alpha_0, H\rangle}{2}\right)}(\det s_1) 
e^{i\langle s_1\mu, H_{1}\rangle}\widetilde{\chi}^1({(s_1\mu)^1}, H^1).
$$
Recall that $W_1=\{e,s_{\alpha_2},s_{\alpha_2}s_{\alpha_1}\}$. 
We first bound $I^1_{e}$, and then exploit cancellation between $I^1_{s_{\alpha_2}}$ and $I^1_{s_{\alpha_2}s_{\alpha_1}}$ to bound their sum $I^1_{s_{\alpha_2}}+I^1_{s_{\alpha_2}s_{\alpha_1}}$.

Applying \eqref{eq: rank one bd}, we obtain 
$$|\widetilde{\chi}^1_{\mu^1}(H^1)|\lesssim |\mu^1|\approx|\langle \mu,\alpha_1\rangle|.$$
This implies the desired bound for $I^1_{e}$:
$$|I^1_{e}|\lesssim \frac{1}{\|\langle\alpha_2,H\rangle\|\cdot \|\langle\alpha_0,H\rangle\|}|\langle\mu,\alpha_1\rangle|.$$

Next we bound $I^1_{s_{\alpha_2}}+I^1_{s_{\alpha_2}s_{\alpha_1}}$. 
Write 
\begin{align}\label{eq: alpha1H}
    \langle\alpha_1, H\rangle=2\pi k_1+f_1(H)
\end{align}
where $k_1\in\mathbb{Z}$, such that 
\begin{align}\label{eq: f1}
    |f_1(H)|\approx \|\langle\alpha_1, H\rangle\|.
\end{align}
Similar to the treatment of \textbf{Case 2} in Proposition \ref{prop: case 2}, the argument below will depend on the parity of both $k_1$ and $2\langle\mu,\alpha_1\rangle/|\alpha_1|^2$. Let 
\begin{align}\label{eq: def delta 1}
\delta_1=\left\{\begin{array}{ll}
    1, & \text{ if both }k_1 \text{ and } 2\frac{\langle\mu,\alpha_1\rangle}{|\alpha_1|^2}\text{ are odd}, \\
    0, & \text{ otherwise}.
\end{array}\right.
\end{align}

We have 
\begin{align}
   &I^1_{s_{\alpha_2}}+I^1_{s_{\alpha_2}s_{\alpha_1}}\nonumber\\
   &=\frac{1}{4 \sin\left(\frac{\langle\alpha_2, H\rangle}{2}\right)\sin\left(\frac{\langle\alpha_0, H\rangle}{2}\right)}\nonumber\\
    &\cdot 
    \left(e^{i\langle s_{\alpha_2}\mu, H_{1}\rangle}\widetilde{\chi}^1({(s_{\alpha_2}\mu)^1}, H^1)-e^{i\langle s_{\alpha_2}s_{\alpha_1}\mu, H_{1}\rangle}\widetilde{\chi}^1({(s_{\alpha_2}s_{\alpha_1}\mu)^1}, H^1)\right)\nonumber\\
    &=J^1_1+J^1_2,\label{eq: cancellation case 3}
    \end{align}
    where 
     $$J^1_1=\frac{\widetilde{\chi}^1({(s_{\alpha_2}\mu)^1}, H^1)}{ \sin\left(\frac{\langle\alpha_2, H\rangle}{2}\right)\sin\left(\frac{\langle\alpha_0, H\rangle}{2}\right)}\left(e^{i\langle s_{\alpha_2}\mu, H_{1}\rangle}-(-1)^{\delta_1} e^{i\langle s_{\alpha_2}s_{\alpha_1}\mu, H_{1}\rangle}\right),$$
    $$J^1_2=\frac{e^{i\langle s_{\alpha_2}s_{\alpha_1}\mu, H_{1}\rangle}}{ \sin\left(\frac{\langle\alpha_2, H\rangle}{2}\right)\sin\left(\frac{\langle\alpha_0, H\rangle}{2}\right)}
    \left((-1)^{\delta_1}\widetilde{\chi}^1({(s_{\alpha_2}\mu)^1}, H^1)-\widetilde{\chi}^1({(s_{\alpha_2}s_{\alpha_1}\mu)^1}, H^1)\right).$$
We treat the desired bound for $J^1_1$ and $J^1_2$ in Propositions \ref{prop: tilde J1} and \ref{prop: tilde J2} respectively, and this finishes the proof of the main proposition.  
\end{proof}

\begin{prop}\label{prop: tilde J1}
Assume that $|\mu|^{-1}\leq \|\langle\alpha_0,H\rangle\|\leq \|\langle\alpha_2,H\rangle\|\leq \|\langle\alpha_1,H\rangle\|$ and $| \langle\mu,\alpha_1\rangle|\lesssim |\mu|\approx |\langle\mu,\alpha_2\rangle|\approx |\langle\mu,\alpha_0\rangle|$. Then
    $$|J^1_1|\lesssim \frac{1}{\|\langle\alpha_2,H\rangle\|\cdot \|\langle\alpha_0,H\rangle\|}|\langle\mu,\alpha_1\rangle|.$$
\end{prop}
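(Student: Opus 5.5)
The plan is to split $J^1_1$ into its three factors, exactly as in the proof of Proposition \ref{prop: J1}. The prefactor $1/\bigl(\sin(\tfrac{\langle\alpha_2,H\rangle}{2})\sin(\tfrac{\langle\alpha_0,H\rangle}{2})\bigr)$ already supplies $\|\langle\alpha_2,H\rangle\|^{-1}\|\langle\alpha_0,H\rangle\|^{-1}$. So it suffices to show
\begin{align*}
\bigl|\widetilde{\chi}^1((s_{\alpha_2}\mu)^1,H^1)\bigr|\cdot\bigl|e^{i\langle s_{\alpha_2}\mu,H_1\rangle}-(-1)^{\delta_1}e^{i\langle s_{\alpha_2}s_{\alpha_1}\mu,H_1\rangle}\bigr|\lesssim|\langle\mu,\alpha_1\rangle|.
\end{align*}

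\emph{Rank-one factor.} Here $\langle s_{\alpha_2}\mu,\alpha_1\rangle=\langle\mu,s_{\alpha_2}\alpha_1\rangle=-\langle\mu,\alpha_0\rangle$, which is of size $|\mu|$. The useful branch of \eqref{eq: rank one bd} is therefore the spatial one:
\[
|\widetilde{\chi}^1((s_{\alpha_2}\mu)^1,H^1)|\le\|\langle\alpha_1,H\rangle\|^{-1}.
\]
This reduces the claim to
\begin{align*}
\bigl|e^{i\langle s_{\alpha_2}\mu,H_1\rangle}-(-1)^{\delta_1}e^{i\langle s_{\alpha_2}s_{\alpha_1}\mu,H_1\rangle}\bigr|\lesssim|\langle\mu,\alpha_1\rangle|\cdot\|\langle\alpha_1,H\rangle\|.
\end{align*}
This is the analogue of \eqref{eq: proving J1}. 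Since the Case 3b hypothesis $|\langle\mu,\alpha_1\rangle|\,\|\langle\alpha_1,H\rangle\|\le1$ holds, the right side is at most $1$, so it is a genuine smallness statement.

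\emph{Exponential difference.} Applying $s_{\alpha_2}$ to \eqref{eq: simple difference} gives
\[
s_{\alpha_2}\mu-s_{\alpha_2}s_{\alpha_1}\mu=\tfrac{2\langle\mu,\alpha_1\rangle}{|\alpha_1|^2}\,s_{\alpha_2}\alpha_1=-\tfrac{2\langle\mu,\alpha_1\rangle}{|\alpha_1|^2}\,\alpha_0 .
\]
Hence the left side equals $2|\sin(\theta)|$ with $\theta=-\tfrac{\langle\mu,\alpha_1\rangle}{|\alpha_1|^2}\langle\alpha_0,H_1\rangle+\tfrac{\delta_1\pi}{2}$. Next expand $\langle\alpha_0,H_1\rangle=\langle\alpha_0,H\rangle-\langle\alpha_0,H^1\rangle=\langle\alpha_0,H\rangle+\tfrac12\langle\alpha_1,H\rangle$, and insert \eqref{eq: alpha1H}. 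By the definition \eqref{eq: def delta 1} of $\delta_1$, the quantity $\delta_1+\tfrac{2\langle\mu,\alpha_1\rangle}{|\alpha_1|^2}k_1$ is even, so the $k_1$-part of $\theta$ is an integer multiple of $\pi$ and drops out of $|\sin\theta|$. The surviving $f_1$-part is $\lesssim|\langle\mu,\alpha_1\rangle|\,|f_1(H)|\approx|\langle\mu,\alpha_1\rangle|\,\|\langle\alpha_1,H\rangle\|$ by \eqref{eq: f1}, which is the desired size.

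\emph{Main obstacle.} This is the remaining term $\tfrac{\langle\mu,\alpha_1\rangle}{|\alpha_1|^2}\langle\alpha_0,H\rangle$ in $\theta$. Unlike Case 2, $\langle\alpha_0,H\rangle$ is not close to $2\pi\mathbb{Z}$, so this term is not small a priori. What I would try first is to use the periodicity of $\widetilde{\chi}$ under $H\mapsto H+$(coroot lattice) together with $\tfrac{\langle\mu,\alpha_1\rangle}{|\alpha_1|^2}\in\tfrac12\mathbb{Z}$ (Lemma \ref{lem: proj weights}). The aim is to rewrite $\langle\alpha_0,H_1\rangle=-\langle\alpha_2,H_1\rangle$ symmetrically and check whether the $\alpha_0$- and $\alpha_2$-contributions cancel modulo $\pi$, leaving only a multiple of $\langle\alpha_1,H\rangle$.

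If that cancellation fails, I would instead regroup $J^1_1$ with $J^1_2$. Both rank-one characters carry large parameters: $-\langle\mu,\alpha_0\rangle$ in $J^1_1$ and $\langle\mu,\alpha_2\rangle$ in $J^1_2$, and these differ by exactly $\langle\mu,\alpha_1\rangle$. The fallback is then to transfer the oscillating phase $e^{i c\langle\alpha_0,H\rangle}$, with $c=-\tfrac{2\langle\mu,\alpha_1\rangle}{|\alpha_1|^2}$ the coefficient of $\alpha_0$ in the weight difference computed above, onto the rank-one sines. Then one would estimate via the half-angle identity exactly as in Proposition \ref{prop: J2}, using $\|\langle\alpha_1,H\rangle\|^{-1}\ge|\langle\mu,\alpha_1\rangle|$ from Case 3b.
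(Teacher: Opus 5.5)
\textbf{There is a genuine gap.} Your reduction is the same as the paper's. You use the spatial branch of \eqref{eq: rank one bd} for $\widetilde{\chi}^1((s_{\alpha_2}\mu)^1,H^1)$, compute the weight difference $-\frac{2\langle\mu,\alpha_1\rangle}{|\alpha_1|^2}\alpha_0$, remove the $k_1$-part through the parity built into $\delta_1$, and bound the $f_1$-part. But you stop at the last step, and neither proposed remedy closes it.

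The missing idea is elementary. Put $n=2\langle\mu,\alpha_1\rangle/|\alpha_1|^2\in\mathbb{Z}$. After discarding the multiple of $\pi$, you have
\[
|\sin\theta|=\bigl|\sin\bigl(\tfrac n2\langle\alpha_0,H\rangle+\tfrac n4 f_1(H)\bigr)\bigr|.
\]
Changing $\langle\alpha_0,H\rangle$ by $2\pi$ shifts the argument by $n\pi$, which leaves $|\sin\theta|$ unchanged. So you may take the representative $\langle\alpha_0,H\rangle\in[-\pi,\pi]$, for which $|\langle\alpha_0,H\rangle|\approx\|\langle\alpha_0,H\rangle\|$. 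Then $|\sin x|\le|x|$ gives
\[
|\sin\theta|\lesssim|\langle\mu,\alpha_1\rangle|\bigl(\|\langle\alpha_0,H\rangle\|+\|\langle\alpha_1,H\rangle\|\bigr)\lesssim|\langle\mu,\alpha_1\rangle|\,\|\langle\alpha_1,H\rangle\|.
\]
The last inequality is the Case 3 ordering $\|\langle\alpha_0,H\rangle\|\le\|\langle\alpha_1,H\rangle\|$ from \eqref{eq: assumption on H}. This is exactly the mod-$2\pi$ reduction already used in Proposition \ref{prop: J1}, which was your model.

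Your remark that $\langle\alpha_0,H\rangle$ ``is not close to $2\pi\mathbb{Z}$, so this term is not small a priori'' is where the argument goes astray. The target is not absolute smallness but $|\langle\mu,\alpha_1\rangle|\,\|\langle\alpha_1,H\rangle\|$. Since $\|\langle\alpha_1,H\rangle\|$ is the largest of the three sines, it dominates the $\alpha_0$-term directly.

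Your first fallback cannot work. We have $\langle\alpha_0,H_1\rangle=\tfrac12\bigl(\langle\alpha_0,H\rangle-\langle\alpha_2,H\rangle\bigr)$, which depends only on the component of $H$ orthogonal to $\alpha_1$. So no cancellation modulo $\pi$ can reduce it to a multiple of $\langle\alpha_1,H\rangle$. The ordering hypothesis is genuinely needed: if $\|\langle\alpha_1,H\rangle\|$ is tiny while $\|\langle\alpha_0,H\rangle\|$ is of order one, the exponential difference need not be small.

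Your second fallback, regrouping with $J^1_2$, simply undoes the splitting in \eqref{eq: cancellation case 3}. It says nothing about the bound for $J^1_1$ alone, which is what the statement asserts. As written, the proposal leaves the key estimate \eqref{eq: proving tilde J1} unproved.
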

\begin{proof}
Recalling \eqref{eq: rank one bd}, as $|\widetilde{\chi}^1({(s_{\alpha_2}\mu)^1}, H^1)|\lesssim \|\langle\alpha_1,H^1\rangle\|^{-1}=\|\langle\alpha_1,H\rangle\|^{-1}$, it suffices to prove that 
\begin{align}\label{eq: proving tilde J1}
    \left|e^{i\langle s_{\alpha_2}\mu, H_{1}\rangle}-(-1)^{\delta_1} e^{i\langle s_{\alpha_2}s_{\alpha_1}\mu, H_{1}\rangle}\right|
    \lesssim   |\langle\mu,\alpha_1\rangle |\cdot\|\langle\alpha_1,H\rangle\|.
\end{align}
Observe that 
\begin{align}\label{eq: simple difference case 3}
    s_{\alpha_2}\mu-s_{\alpha_2}s_{\alpha_1}\mu=2\frac{\langle\mu,\alpha_1\rangle}{|\alpha_1|^2}s_{\alpha_2}\alpha_1
    =-2\frac{\langle\mu,\alpha_1\rangle}{|\alpha_1|^2}\alpha_0.
\end{align}
Then 
\begin{align*}
  \left|e^{i\langle s_{\alpha_2}\mu, H_{1}\rangle}-(-1)^{\delta_1} e^{i\langle s_{\alpha_2}s_{\alpha_1}\mu, H_{1}\rangle}\right|
&=\left|e^{i\langle s_{\alpha_2}\mu, H_{1}\rangle-i\langle s_{\alpha_2}s_{\alpha_1}\mu, H_{1}\rangle+i{\delta_1}\pi }-1\right|\\
&=\left|e^{-i2\frac{\langle\mu,\alpha_1\rangle}{|\alpha_1|^2}\langle \alpha_0, H_{1}\rangle+i{\delta_1}\pi }-1\right|\\
&=2\left|\sin\left(-\frac{\langle\mu,\alpha_1\rangle}{|\alpha_1|^2}\langle \alpha_0, H_{1}\rangle+\frac{{\delta_1} \pi}{2}\right)\right|.
\end{align*}
From \eqref{eq: alpha1H} we have 
$$\langle\alpha_0,H^1\rangle=\frac{\langle\alpha_1, H\rangle}{|\alpha_1|^2}\langle\alpha_0,\alpha_1\rangle=-\frac{1}{2}\langle\alpha_1, H\rangle=-\pi k_1-\frac12 f_1(H).$$
Thus 
\begin{align*}
   \frac{\langle\mu,\alpha_1\rangle}{|\alpha_1|^2}\langle \alpha_0, H^{1}\rangle+\frac{{\delta_1} \pi}{2}
    &=\frac{\pi}{2}\left({\delta_1}- \frac{2\langle\mu,\alpha_1\rangle }{|\alpha_1|^2}k_1\right)- \frac{\langle\mu,\alpha_1\rangle}{2|\alpha_1|^2}f_1(H).
\end{align*}
Recalling from the definition \eqref{eq: def delta 1} of ${\delta_1}$ that ${\delta_1}- \frac{2\langle\mu,\alpha_1\rangle }{|\alpha_1|^2}k_1$ is always an even number, the above identity implies that 
\begin{align*}
    &\left|\sin\left(-\frac{\langle\mu,\alpha_1\rangle}{|\alpha_1|^2}\langle \alpha_0, H_{1}\rangle+\frac{{\delta_1}\pi}{2}\right)\right|\\
    &=\left|\sin\left(-\frac{\langle\mu,\alpha_1\rangle}{|\alpha_1|^2}\langle \alpha_0, H\rangle+  \frac{\langle\mu,\alpha_1\rangle}{|\alpha_1|^2}\langle \alpha_0, H^{1}\rangle +\frac{{\delta_1} \pi}{2}\right)\right|\\
    &=\left|\sin\left(\frac{\langle\mu,\alpha_1\rangle}{|\alpha_1|^2}\langle \alpha_0, H\rangle+ \frac{\langle\mu,\alpha_1\rangle}{2|\alpha_1|^2}f_1(H)\right)\right|.
\end{align*}
Noting that $\langle\mu,\alpha_1\rangle/|\alpha_1|^2\in \frac12\mathbb{Z}$, in the evaluation of the above quantity we may assume that 
$\langle \alpha_0, H\rangle\in[-\pi,\pi]$ and thus $|\langle \alpha_0, H\rangle|\approx \|\langle \alpha_0, H\rangle\|$. 
Then 
\begin{align*}
    \left|\sin\left(-\frac{\langle\mu,\alpha_1\rangle}{|\alpha_1|^2}\langle \alpha_0, H_{1}\rangle+\frac{{\delta_1} \pi}{2}\right)\right|
    \lesssim 
|\langle\mu,\alpha_1\rangle|\cdot \|\langle\alpha_0, H\rangle\|+ |\langle\mu,\alpha_1\rangle|\cdot |f_1(H)|.
\end{align*}
Using \eqref{eq: f1} and the assumption that 
$\|\langle\alpha_0,H\rangle\|\leq\|\langle\alpha_1,H\rangle\|$,
we get 
\begin{align*}
    \left|\sin\left(\frac{\langle\mu,\alpha_1\rangle}{|\alpha_1|^2}\langle \alpha_1, H_{0}\rangle+\frac{{\delta_1} \pi}{2}\right)\right|
    \lesssim |\langle\mu,\alpha_1\rangle |\cdot \|\langle\alpha_1,H\rangle\|.
\end{align*}
We have verified \eqref{eq: proving tilde J1} and thus finished the proof.     
\end{proof}

\begin{prop}\label{prop: tilde J2}
Assume that $|\mu|^{-1}\leq \|\langle\alpha_0,H\rangle\|\leq \|\langle\alpha_2,H\rangle\|\leq \|\langle\alpha_1,H\rangle\|$ and $| \langle\mu,\alpha_1\rangle|\lesssim |\mu|\approx |\langle\mu,\alpha_2\rangle|\approx |\langle\mu,\alpha_0\rangle|$. Then
    $$|J^1_2|\lesssim \frac{1}{\|\langle\alpha_2,H\rangle\|\cdot \|\langle\alpha_0,H\rangle\|}|\langle\mu,\alpha_1\rangle|.$$
\end{prop}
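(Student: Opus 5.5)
The plan is to follow Proposition \ref{prop: J2}. Since the prefactor of $J^1_2$ has modulus at most $\left(\|\langle\alpha_2,H\rangle\|\cdot\|\langle\alpha_0,H\rangle\|\right)^{-1}$ and $|e^{i\langle s_{\alpha_2}s_{\alpha_1}\mu,H_1\rangle}|=1$, it suffices to prove
\begin{align*}
\left|(-1)^{\delta_1}\widetilde{\chi}^1((s_{\alpha_2}\mu)^1,H^1)-\widetilde{\chi}^1((s_{\alpha_2}s_{\alpha_1}\mu)^1,H^1)\right|\lesssim |\langle\mu,\alpha_1\rangle|.
\end{align*}
Both terms share the denominator $\sin(\langle\alpha_1,H\rangle/2)$, whose modulus is $\|\langle\alpha_1,H\rangle\|$. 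The numerators are $\sin(\langle (s_{\alpha_2}\mu)^1,H^1\rangle+\delta_1\pi)$ and $\sin(\langle (s_{\alpha_2}s_{\alpha_1}\mu)^1,H^1\rangle)$. Applying the identity $\sin a-\sin b=2\cos\frac{a+b}{2}\sin\frac{a-b}{2}$, the left side is bounded by
\begin{align*}
\frac{2}{\|\langle\alpha_1,H\rangle\|}\left|\sin\left(\tfrac12\left(\langle (s_{\alpha_2}\mu)^1-(s_{\alpha_2}s_{\alpha_1}\mu)^1,H^1\rangle+\delta_1\pi\right)\right)\right|.
\end{align*}

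Next compute the phase difference using \eqref{eq: simple difference case 3}. The vector $s_{\alpha_2}\mu-s_{\alpha_2}s_{\alpha_1}\mu$ equals $-2\frac{\langle\mu,\alpha_1\rangle}{|\alpha_1|^2}\alpha_0$. Its projection onto $\alpha_1$ pairs with $H^1$ to give
\begin{align*}
-2\frac{\langle\mu,\alpha_1\rangle}{|\alpha_1|^2}\langle\alpha_0,H^1\rangle=\frac{\langle\mu,\alpha_1\rangle}{|\alpha_1|^2}\langle\alpha_1,H\rangle,
\end{align*}
because $\langle\alpha_0,H^1\rangle=-\frac12\langle\alpha_1,H\rangle$, as computed in Proposition \ref{prop: tilde J1}. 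Substituting \eqref{eq: alpha1H}, this becomes
\begin{align*}
\frac{\pi}{2}\cdot\frac{2\langle\mu,\alpha_1\rangle}{|\alpha_1|^2}\,2k_1+\frac{\langle\mu,\alpha_1\rangle}{|\alpha_1|^2}f_1(H).
\end{align*}
So half of the phase difference plus $\delta_1\pi/2$ is $\frac{\pi}{2}\bigl(\delta_1+\frac{2\langle\mu,\alpha_1\rangle}{|\alpha_1|^2}k_1\bigr)+\frac{\langle\mu,\alpha_1\rangle}{2|\alpha_1|^2}f_1(H)$. By the definition \eqref{eq: def delta 1}, the bracket is always even, and the $\sin$ of the whole expression reduces in absolute value to $\left|\sin\left(\frac{\langle\mu,\alpha_1\rangle}{2|\alpha_1|^2}f_1(H)\right)\right|$.

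Finally, use $|\sin x|\le|x|$ together with \eqref{eq: f1}, which gives $|f_1(H)|\approx\|\langle\alpha_1,H\rangle\|$. The resulting bound is $\frac{2}{\|\langle\alpha_1,H\rangle\|}\cdot\frac{|\langle\mu,\alpha_1\rangle|}{2|\alpha_1|^2}|f_1(H)|\lesssim|\langle\mu,\alpha_1\rangle|$, which is what we need. The only delicate step is the parity bookkeeping: one must check carefully that the sign $(-1)^{\delta_1}$ exactly absorbs the lattice part $\pi k_1\cdot 2\langle\mu,\alpha_1\rangle/|\alpha_1|^2$. This is the same mechanism that made the cancellation work in Proposition \ref{prop: J2}. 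Unlike Case 2, we never use smallness of $f_1$ relative to $|\mu|^{-1}$. The cancellation comes entirely from $f_1(H)$ matching the denominator $\sin(\langle\alpha_1,H\rangle/2)$.
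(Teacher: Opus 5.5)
Your proposal is correct and is essentially the paper's own proof. The steps are the same: reduce to the difference of rank-one characters, apply the sum-to-product bound, compute the phase difference as $\frac{\langle\mu,\alpha_1\rangle}{|\alpha_1|^2}\langle\alpha_1,H\rangle$ via \eqref{eq: simple difference case 3}, absorb the lattice part using the evenness of $\delta_1+\frac{2\langle\mu,\alpha_1\rangle}{|\alpha_1|^2}k_1$, and finish with $|\sin x|\le|x|$ and \eqref{eq: f1}. (Your closing contrast with Case 2 is slightly off: Proposition \ref{prop: J2} likewise only uses $|f_0(H)|\approx\|\langle\alpha_0,H\rangle\|$, and the smallness $|f_0(H)|\lesssim|\mu|^{-1}$ is needed only for $J^0_1$. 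This does not affect your argument.)
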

\begin{proof}
It suffices to prove that 
\begin{align}\label{eq: proving J2 tilde}
    \left|(-1)^{\delta_1}\widetilde{\chi}^1({(s_{\alpha_2}\mu)^1}, H^1)-\widetilde{\chi}^1({(s_{\alpha_2}s_{\alpha_1}\mu)^1}, H^1)\right|
    \lesssim |\langle\mu,\alpha_1\rangle|.
\end{align}
We have 
\begin{align}\label{eq: initial estimate}
 &\left|(-1)^{\delta_1}\widetilde{\chi}^1({(s_{\alpha_2}\mu)^1}, H^1)-\widetilde{\chi}^1({(s_{\alpha_2}s_{\alpha_1}\mu)^1}, H^1)\right|\nonumber\\
 &=\frac1{\|\langle\alpha_1,H\rangle\|}\left|\sin(\langle(s_{\alpha_2}\mu)^1,H^1\rangle+{\delta_1}\pi)-\sin(\langle (s_{\alpha_2}s_{\alpha_1}\mu)^1,H^1\rangle)\right|\nonumber\\
 &\leq \frac2{\|\langle\alpha_1,H\rangle\|}\left|\sin\left(\frac12(\langle(s_{\alpha_2}\mu)^1,H^1\rangle+{\delta_1}\pi-\langle (s_{\alpha_2}s_{\alpha_1}\mu)^1,H^1\rangle)\right)\right|.
\end{align}
Now compute
$$\langle(s_{\alpha_2}\mu)^1,H^1\rangle=\frac{\langle s_{\alpha_2}\mu,\alpha_1\rangle}{|\alpha_1|^2}\langle\alpha_1,H\rangle,$$
$$\langle(s_{\alpha_2}s_{\alpha_1}\mu)^1,H^1\rangle=\frac{\langle s_{\alpha_2}s_{\alpha_1}\mu,\alpha_1\rangle}{|\alpha_1|^2}\langle\alpha_1,H\rangle,$$
which together imply, using \eqref{eq: simple difference case 3} and \eqref{eq: alpha1H}, that  
\begin{align*}
\langle(s_{\alpha_2}\mu)^1,H^1\rangle-\langle(s_{\alpha_2}s_{\alpha_1}\mu)^1,H^1\rangle
    &=-
2\frac{\langle\mu,\alpha_1\rangle}{|\alpha_1|^2|\alpha_1|^2}\langle\alpha_0,\alpha_1\rangle\langle\alpha_1,H\rangle\\
&=\frac{\langle\mu,\alpha_1\rangle}{|\alpha_1|^2}\langle\alpha_1,H\rangle\\
&=\frac{\langle\mu,\alpha_1\rangle}{|\alpha_1|^2}2\pi k_1+\frac{\langle\mu,\alpha_1\rangle}{|\alpha_1|^2}f_1(H).
\end{align*}
Using the fact that  $\frac{2\langle\mu,\alpha_1\rangle}{|\alpha_1|^2}k_1+{\delta_1}$ is always an even number, we then have 
\begin{align*}
   &\left|\sin\left(\frac12(\langle(s_{\alpha_2}\mu)^1,H^1\rangle+{\delta_1}\pi-\langle (s_{\alpha_2}s_{\alpha_1}\mu)^1,H^1\rangle)\right)\right|\\
   =&\left|\sin\left(\frac{\pi}{2}\left(\frac{2\langle\mu,\alpha_1\rangle}{|\alpha_1|^2}k_1+{\delta_1}\right)-\frac{\langle\mu,\alpha_1\rangle}{2|\alpha_1|^2}f_1(H)\right)\right|\\
   =&\left|\sin\left(\frac{\langle\mu,\alpha_1\rangle}{2|\alpha_1|^2}f_1(H)\right)\right|\\
   \lesssim& |\langle\mu,\alpha_1\rangle|\cdot |f_1(H)|\\
   \lesssim &|\langle\mu,\alpha_1\rangle|\cdot  \|\langle\alpha_1,H\rangle\|.
\end{align*}
Here in the last inequality we used \eqref{eq: f1}.
Applying the above estimate to \eqref{eq: initial estimate}, we obtain the desired estimate \eqref{eq: proving J2 tilde} and conclude the proof.  
    \end{proof}

\section{$L^p$ bounds for characters}\label{sec: Lp}
Let $A=\{H\in\mathfrak{t}: \langle\alpha_1,H\rangle\geq 0,\ \langle\alpha_2,H\rangle\geq 0,\ -\langle\alpha_0,H\rangle\leq 2\pi\}$ be a Weyl alcove. 
Applying the Weyl integration formula \cite{Wey39} along with \eqref{eq: second}, we have 
\begin{align}\label{eq: chi Lp first}
    \|\chi_\mu\|_{L^p(G)}^p
   &=4\int_A |\chi(\mu,H)|^p \prod_{\alpha\in\widetilde{\Phi}}\|\langle\alpha,H\rangle\|^2\,dH \nonumber\\
   &\lesssim d_\mu^p\int_A \sum_{s\in W}\prod_{\alpha\in\widetilde{\Phi}}
   \frac{\|\langle\alpha,H\rangle\|^2}{(1+|\langle s(\mu+\rho),\alpha\rangle|\cdot\|\langle\alpha, H\rangle\|)^{p}}
   \,dH.
\end{align}
Observe that the above integrand is invariant under the extended affine Weyl group: it is invariant not only under the Weyl group action \(H \mapsto sH\) (\(s \in W\)), but also under translations by the coweight lattice
\[
\{ H \in \mathfrak{t} : \langle \alpha, H \rangle \in 2\pi \mathbb{Z} \}.
\]
Consequently, without loss of generality, to obtain an upper bound for the above integral, we may replace the alcove $A$ by the neighborhood
\[
A_0 = \left\{ H \in \mathfrak{t} : \langle \alpha_1, H \rangle \geq 0,\ \langle \alpha_2, H \rangle \geq 0,\ -\langle \alpha_0, H \rangle \leq \tfrac{4\pi}{3} \right\}
\]
of $H=0$ in $A$, since the translates of $A_0$ under the extended affine Weyl group already cover $A$ (Figure \ref{fig: alcoves}). 

\begin{figure}[h]
\centering
\begin{tikzpicture}[scale=1.2, line cap=round, line join=round]

\def\s{2}
\def\h{1.7320508} 

\coordinate (O)  at (0,0);

\coordinate (L)  at (-\s,0);
\coordinate (LT) at (-1,\h);

\coordinate (R)  at (\s,0);
\coordinate (RT) at (1,\h);

\coordinate (B1) at (-1,-\h);
\coordinate (B2) at (1,-\h);

\draw[thick] (L)--(LT)--(O)--cycle;
\draw[thick] (O)--(RT)--(R)--cycle;
\draw[thick] (O)--(B1)--(B2)--cycle;


\fill[gray!60] (0,0) -- ({4/3},0) -- ({2/3},{2*\h/3})  -- cycle;

\draw ({1/3},{\h/3}) -- ({5/3},{\h/3});
\draw ({2/3},0) -- ({4/3},{2*\h/3});
\draw ({2/3},{2*\h/3}) -- ({4/3},0);

\draw ({-4/3},{2*\h/3}) -- ({-2/3},0);

\draw ({-1/3},{-\h/3}) -- ({1/3},{-\h});            

\end{tikzpicture}

\caption{Translates of $A_0$ under the extended affine Weyl group; $A_0$ is the shaded region.}
\label{fig: alcoves}
\end{figure}
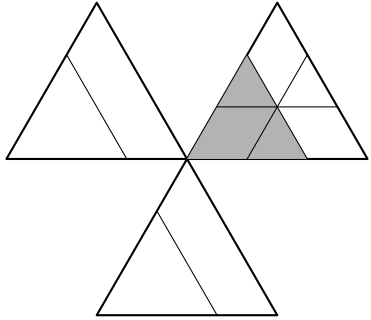

For $s \in W$, let
\[
\widetilde{a} = |\langle s(\mu+\rho), \alpha_1 \rangle|,\quad
\widetilde{b} = |\langle s(\mu+\rho), \alpha_2 \rangle|,\quad
\widetilde{c} = |\langle s(\mu+\rho), \alpha_0 \rangle|.
\]
For $H\in\mathfrak{t}$, we introduce the coordinates
$$t_1=\langle\alpha_1,H\rangle, \quad t_2=\langle\alpha_2,H\rangle.$$
Then $A_0=\left\{t_1,t_2\geq 0, \ t_1+t_2\leq \frac{4\pi}{3}\right\}$, and for $H\in A_0$, 
$$t_1\approx \|\langle\alpha_1,H\rangle\|,\quad t_2\approx \|\langle\alpha_2,H\rangle\|,\quad t_1+t_2\approx \|\langle\alpha_0,H\rangle\|.$$
We aim to bound 
\begin{align}
\mathcal{I}(s):=&\int_{A_0} \prod_{\alpha\in\widetilde{\Phi}}
   \frac{\|\langle\alpha,H\rangle\|^2}{(1+|\langle s(\mu+\rho),\alpha\rangle|\cdot\|\langle\alpha, H\rangle\|)^{p}}
   \,dH \label{eq: ABC integral} \\
   \lesssim& \int_{A_0}\frac{t_1^2t_2^2(t_1+t_2)^2}{(1+\widetilde{a}t_1)^p(1+\widetilde{b}t_2)^p(1+\widetilde{c}(t_1+t_2))^p}\,dt_1\,dt_2. \label{eq: ABC integral 2}
\end{align}

\begin{prop}\label{prop: I}
Let $(a,b,c)$ be the relabeling of the set 
$\{ |\langle \mu+\rho, \alpha \rangle| : \alpha \in \widetilde{\Phi} \}$ such that $a\ge b\ge c$. Then for all $s\in W$, we have 
    \[
\mathcal I(s)\lesssim
\begin{cases}
a^{-p}b^{-p}c^{-p}, & 0<p<\frac83,\\[1mm]
a^{-p}b^{-p}c^{-p}\log(2+c), & p=\frac83,\\[1mm]
a^{-p}b^{-p}c^{2p-8}, & \frac83<p<3,\\[1mm]
a^{-3}b^{-3}c^{-2}\log\left(2+\frac ac\right), & p=3, \\[1mm]
a^{-3}b^{-p}c^{p-5}, & 3<p<5,\\[1mm]
a^{-3}b^{-5}\log\left(2+\frac bc\right), & p=5,\\[1mm]
a^{-3}b^{-5}, & p>5.
\end{cases}
\]
\end{prop}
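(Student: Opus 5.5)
The plan is to reduce $\mathcal I(s)$ to iterated one-variable integrals of the form $\int t^{k}(1+\lambda t)^{-p}\,dt$. Each such integral can be evaluated up to constants, and comparing exponents then produces the thresholds $p=\frac83,3,5$ and the logarithms at those endpoints.

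First I would record the structural facts that will be used repeatedly. For $\mathrm{SU}(3)$ we have $\alpha_0=-(\alpha_1+\alpha_2)$. Hence the three numbers $|\langle\mu+\rho,\alpha\rangle|$, $\alpha\in\widetilde\Phi$, satisfy $a=b+c$, so $b\ge a/2$ and $a\approx b$. Also $(\widetilde a,\widetilde b,\widetilde c)$ is a permutation of $(a,b,c)$. Since all three are $\ge 1$, the factors $(1+\lambda t)^{-p}$ are harmless near $t\approx 1$, and $A_0$ is bounded. Starting from \eqref{eq: ABC integral 2}, I would split $A_0$ into the two regions $\{t_1\le t_2\}$ and $\{t_2\le t_1\}$. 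The integrand of \eqref{eq: ABC integral 2} is symmetric under $(t_1,\widetilde a)\leftrightarrow(t_2,\widetilde b)$, so it suffices to treat $\{t_1\le t_2\}$ for an arbitrary permutation $(x,y,z)$ of $(a,b,c)$ placed in the roles of $(\widetilde a,\widetilde b,\widetilde c)$. On this region $t_1+t_2\approx t_2$, so the integrand is
$$\lesssim \frac{t_1^2\,t_2^4}{(1+xt_1)^p\,(1+yt_2)^p\,(1+zt_2)^p}.$$
The $t_2$-dependence involves only $\{y,z\}$. Because $a\approx b$, up to constants there are only two configurations:
\begin{itemize}
\item[(A)] $x\approx a$ and $\{y,z\}\approx\{a,c\}$;
\item[(B)] $x=c$ and $y,z\approx a$.
\end{itemize}

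Next I would do the inner integral. Set
$$g_x(t_2)=\int_0^{t_2}t_1^2(1+xt_1)^{-p}\,dt_1 .$$
For $t_2\le 1/x$ one has $g_x(t_2)\approx t_2^3$. For $t_2\ge 1/x$ one has
$$g_x(t_2)\approx\begin{cases} x^{-p}t_2^{3-p}, & p<3,\\ x^{-3}\log(2+xt_2), & p=3,\\ x^{-3}, & p>3.\end{cases}$$
The outer integral is then $\int_0^{4\pi/3} t_2^4\, g_x(t_2)\,(1+yt_2)^{-p}(1+zt_2)^{-p}\,dt_2$. I would split it at the breakpoints $1/a$ and $1/c$, since $1/a\le 1/c$ and $1/x,1/y,1/z$ are among these. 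On each of the three subintervals the integrand is, up to a logarithm, a pure power of $t_2$.

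Then I would evaluate the pieces.
\begin{itemize}
\item In configuration (A), the range $t_2\le 1/a$ contributes $a^{-8}$. The range $1/a\le t_2\le 1/c$ contributes
$$a^{-2p}\int_{1/a}^{1/c}t_2^{7-3p}\,dt_2 \quad (p<3),$$
and analogous expressions for $p\ge 3$. The range $t_2\ge 1/c$ contributes $a^{-2p}c^{-p}\int_{1/c}^{O(1)}t_2^{7-3p}\,dt_2$. The exponent $7-3p=-1$ gives the threshold $\frac83$ and the $\log(2+c)$. The case $p=3$ produces $\log(2+a/c)$ through $g_x$ and the range $[1/a,1/c]$.
\item In configuration (B), the inner integral saturates to $c^{-3}$ only for $t_2\ge 1/c$ (when $p>3$). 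In between one gets $\int_{1/a}^{1/c}t_2^{7}(at_2)^{-2p}\,dt_2$-type terms. Beyond $1/c$ one gets $c^{-3}a^{-2p}\int t_2^{4-2p}\,dt_2$, or a mixed version with $b$ and $c$ exponents when $y\ne z$ in size class. These give the thresholds $p=3$ and $p=5$, with $\log(2+b/c)$ at $p=5$.
\end{itemize}
Finally I would take, for each range of $p$, the maximum over the configurations and subintervals. Using $a\approx b\ge c$ and $c\ge 1$, I would check that this maximum is dominated by the listed expression; for example $a^{-8}\le a^{-p}b^{-p}c^{-p}$ when $p<\frac83$.

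The main obstacle is the bookkeeping in the last step, not any individual integral. One must make sure that in every regime the dominant piece really is the one stated, with $a$ and $b$ kept distinct where the statement distinguishes them. The logarithms must also be attributed correctly: $\log(2+c)$ from the $t_2\in[1/c,O(1)]$ range, $\log(2+a/c)$ and $\log(2+b/c)$ from the $[1/a,1/c]$ range. I would organize this as a small table indexed by configuration, subinterval and range of $p$, and verify each entry against the claimed bound. Where $a$ versus $b$ matters, I would track which of $y,z$ equals $b$ rather than replacing both by $a$.
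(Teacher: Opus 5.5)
\textbf{Comparison with the paper.} Your route is genuinely different from the paper's. The paper never computes your configuration (B). It observes that $\mathcal I(s)$ is dominated by the assignments $(\widetilde a,\widetilde b,\widetilde c)=(a,b,c)$ or $(b,a,c)$, and that for $(a,b,c)$ the region $\{t_1\le t_2\}$ dominates. Both facts follow pointwise from $(1+\lambda u)(1+\lambda' u')\le(1+\lambda' u)(1+\lambda u')$ for $\lambda\ge\lambda'$, $u\le u'$: put the largest parameter on the smallest variable and $c$ on $t_1+t_2$. The paper then does a single iterated integral, split at $a^{-1},b^{-1},c^{-1}$, with $a$ and $b$ kept distinct. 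So its argument works for any $a\ge b\ge c\gtrsim 1$ and never uses $a=b+c$. You instead use the $\mathrm{SU}(3)$ relation $a=b+c$ to collapse everything to two size configurations, and then compute both. This avoids the rearrangement step at the cost of one extra computation. Since $b\le a\le 2b$, the distinctions between $a$ and $b$ in the statement only change the bounds by constant factors, so the reduction is legitimate.

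\textbf{Errors in the sketched computations.} The calculations as written would not close.
\begin{enumerate}
\item In configuration (A), on $1/a\le t_2\le 1/c$ the integrand is $t_2^4\cdot a^{-p}t_2^{3-p}\cdot (at_2)^{-p}\cdot 1=a^{-2p}t_2^{7-2p}$, not $a^{-2p}t_2^{7-3p}$. The extra factor $t_2^{-p}$ from the parameter $c$ only appears once $t_2\ge 1/c$.
\item With your exponent $7-3p$, the middle range gives $a^{-2p}c^{3p-8}$ for $p<\frac83$ and $a^{p-8}$ for $\frac83<p<3$. For $p>2$ and $c$ large these exceed the targets $a^{-2p}c^{-p}$ and $a^{-2p}c^{2p-8}$ respectively, so your final domination check would fail. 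With the correct exponent $7-2p$ the middle range gives $a^{-2p}c^{2p-8}$, which is fine in both regimes.
\item The threshold $p=5$ and the factor $\log(2+b/c)$ do not come from (B). They come from (A) on $[1/a,1/c]$ for $p>3$, where $g_x\approx a^{-3}$ and the contribution is $a^{-3-p}\int_{1/a}^{1/c}t_2^{4-p}\,dt_2$.
\item In (B) both $y,z\approx a$, so there is no ``mixed'' version. The pieces there are:
\begin{itemize}
\item $a^{-8}$ from $t_2\le 1/a$;
\item $a^{-2p}\int_{1/a}^{1/c}t_2^{7-2p}\,dt_2$ from the middle range, whose only threshold is a harmless one at $p=4$;
\item from $t_2\ge 1/c$: $c^{-p}a^{-2p}\int_{1/c}^{4\pi/3}t_2^{7-3p}\,dt_2$ for $p<3$, $a^{-6}c^{-2}$ for $p=3$, and $a^{-2p}c^{2p-8}$ for $p>3$.
\end{itemize}
All of these are dominated by the stated bounds.
\end{enumerate}
With these corrections your plan goes through.
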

\begin{proof}
As $s$ varies over $W$, the triple $(\widetilde{a},\widetilde{b},\widetilde{c})$ runs over all permutations of the fixed set
$\{ |\langle \mu+\rho, \alpha \rangle| : \alpha \in \widetilde{\Phi} \}$. Observe that the integral in \eqref{eq: ABC integral 2} is maximized when
\[
(\widetilde a,\widetilde b,\widetilde c)=(a,b,c)
\quad\text{or}\quad
(b,a,c).
\]
Thus it suffices to get the desired estimate for the integral in \eqref{eq: ABC integral 2} when $(\widetilde a,\widetilde b,\widetilde c)=(a,b,c)$. 

Splitting \(A_0\) into \(t_1\ge t_2\) and \(t_1\le t_2\), we may further assume 
\[t_1\le t_2,\]
since this region yields the larger contribution under the assumption $a\geq b$. Then $t_1+t_2\approx t_2$. 

Let $M=\frac{4\pi}{3}$. We have 
\begin{align*}
    \mathcal{I}=\mathcal{I}(s)&\lesssim \int_{0\leq t_1\leq t_2\leq M} 
    \frac{t_1^2t_2^4}{(1+at_1)^p(1+bt_2)^p(1+ct_2)^p}
    \,dt_1\,dt_2\\
    &=\int_0^M \frac{t_2^4}{(1+bt_2)^p(1+ct_2)^p}
\left(\int_0^{t_2}\frac{t_1^2}{(1+at_1)^p}\,dt_1\right)\,dt_2.
\end{align*}

\underline{The case of \(0<p<3\).} We have
\[
\int_0^{t_2}\frac{t_1^2}{(1+at_1)^p}\,dt_1
\lesssim
\begin{cases}
t_2^3, & 0\le t_2\le a^{-1},\\[1mm]
a^{-p}t_2^{3-p}, & t_2\ge a^{-1}.
\end{cases}
\]
Recall that $a \ge b \ge c > 0$. We split the $t_2$-integral at 
$t_2 = a^{-1}, b^{-1}, c^{-1}$ to obtain 
\begin{align*}
\mathcal I
\lesssim&
\int_0^{a^{-1}} t_2^7\,dt_2
+a^{-p}\int_{a^{-1}}^{b^{-1}} t_2^{7-p}\,dt_2\\
& +a^{-p}b^{-p}\int_{b^{-1}}^{c^{-1}} t_2^{7-2p}\,dt_2
+a^{-p}b^{-p}c^{-p}\int_{c^{-1}}^M t_2^{7-3p}\,dt_2.
\end{align*}
For the first three terms, we have 
\begin{align*}
    & \int_0^{a^{-1}} t_2^7\,dt_2
+a^{-p}\int_{a^{-1}}^{b^{-1}} t_2^{7-p}\,dt_2
+a^{-p}b^{-p}\int_{b^{-1}}^{c^{-1}} t_2^{7-2p}\,dt_2\\
\lesssim & a^{-8}+a^{-p}b^{p-8}+a^{-p}b^{-p}c^{2p-8}.
\end{align*}
For the last term, we have 
$$a^{-p}b^{-p}c^{-p}\int_{c^{-1}}^M t_2^{7-3p}\,dt_2\lesssim \begin{cases}
a^{-p}b^{-p}c^{-p}, & 0<p<\frac83,\\[1mm]
a^{-p}b^{-p}c^{-p}\log(2+c), & p=\frac83,\\[1mm]
a^{-p}b^{-p}c^{2p-8}, & \frac83<p<3.
\end{cases}$$
Using $a\geq b\geq c$,  we obtain
\[
\mathcal I\lesssim
\begin{cases}
a^{-p}b^{-p}c^{-p}, & 0<p<\frac83,\\[1mm]
a^{-p}b^{-p}c^{-p}\log(2+c), & p=\frac83,\\[1mm]
a^{-p}b^{-p}c^{2p-8}, & \frac83<p<3.
\end{cases}
\]

\underline{The case of $p=3$.} We have
\begin{align*}
\int_0^{t_2}\frac{t_1^2}{(1+at_1)^3}\,dt_1
\lesssim
\begin{cases}
t_2^3, & 0\le t_2\le a^{-1},\\[4pt]
a^{-3}\log(2+at_2), & t_2\ge a^{-1}.
\end{cases}
\end{align*}
Then we have 
\begin{align*}
    \mathcal{I}
    \lesssim &\int_0^{a^{-1}}t_2^7\, dt_2
    +a^{-3}\int_{a^{-1}}^{b^{-1}}t_2^4\log(2+at_2)\, dt_2\\
    &+a^{-3}b^{-3}\int_{b^{-1}}^{c^{-1}}t_2\log(2+at_2)\, dt_2
    +a^{-3}b^{-3}c^{-3}\int_{c^{-1}}^{M}t_2^{-2}\log(2+at_2)\, dt_2.
\end{align*}
The last integral above may be estimated by 
\begin{align*}
 \int_{c^{-1}}^{M}t_2^{-2}\log(2+at_2)\,dt_2&
 \lesssim \int_{c^{-1}}^{M}t_2^{-2}\left[\log\left(2+\frac ac\right)+\log(ct_2)\right] \,dt_2\\
 &\lesssim c\log\left(2+\frac ac\right),
\end{align*}
so that 
\begin{align*}
    \mathcal{I}
    \lesssim &  a^{-8}+a^{-3}b^{-5}\log\left(2+\frac ab\right)
    +a^{-3}b^{-3}c^{-2}\log\left(2+\frac ac\right).
\end{align*}
Since $a\ge b\ge c$, the last term dominates, and hence
\begin{align*}
\mathcal I
\lesssim a^{-3}b^{-3}c^{-2}\log\!\left(2+\frac ac\right).
\end{align*}

\underline{The case of \(p>3\).} We have 
\[
\int_0^{t_2}\frac{t_1^2}{(1+at_1)^p}\,dt_1
\lesssim
\begin{cases}
t_2^3, & 0\le t_2\le a^{-1},\\[1mm]
a^{-3}, & t_2\ge a^{-1}.
\end{cases}
\]
Hence
\begin{align*}
\mathcal I
\lesssim & 
\int_0^{a^{-1}} t_2^7\,dt_2
+a^{-3}\int_{a^{-1}}^{b^{-1}} t_2^4\,dt_2
 \\
&+a^{-3}b^{-p}\int_{b^{-1}}^{c^{-1}} t_2^{4-p}\,dt_2+a^{-3}b^{-p}c^{-p}\int_{c^{-1}}^M t_2^{4-2p}\,dt_2.
\end{align*}
For the third term, we have 
\[
a^{-3}b^{-p}\int_{b^{-1}}^{c^{-1}} t_2^{4-p}\,dt_2
\lesssim
\begin{cases}
a^{-3}b^{-p}c^{p-5}, & 3<p<5,\\[1mm]
a^{-3}b^{-5}\log\left(2+\frac bc\right), & p=5,\\[1mm]
a^{-3}b^{-5}, & p>5.
\end{cases}
\]
For the remaining terms, we have 
\begin{align*}
    & \int_0^{a^{-1}} t_2^7\,dt_2
+a^{-3}\int_{a^{-1}}^{b^{-1}} t_2^4\,dt_2
 +a^{-3}b^{-p}c^{-p}\int_{c^{-1}}^M t_2^{4-2p}\,dt_2 \\ 
 \lesssim & a^{-8}+a^{-3}b^{-5}+a^{-3}b^{-p}c^{p-5}. 
\end{align*}
Using $a\geq b\geq c$, we obtain
\[
\mathcal I\lesssim
\begin{cases}
a^{-3}b^{-p}c^{p-5}, & 3<p<5,\\[1mm]
a^{-3}b^{-5}\log\left(2+\frac bc\right), & p=5,\\[1mm]
a^{-3}b^{-5}, & p>5.
\end{cases}
\]
The proof is completed. 
\end{proof}

We can now conclude:

\begin{proof}[Proof of Theorem \ref{thm: Lp}]
For $\mu\in\Lambda^+$, again let $(a,b,c)$ be the relabeling of 
$\{|\langle \mu+\rho,\alpha\rangle| : \alpha \in \widetilde{\Phi}\}$ such that $a\geq b\geq c$. 
By the Weyl dimension formula, for $\mu\in\Lambda^+$, we have 
$$d_\mu=\frac{\prod_{\alpha\in\widetilde{\Phi}}|\langle\mu+\rho,\alpha\rangle|}{\prod_{\alpha\in\widetilde{\Phi}}|\langle\rho,\alpha\rangle|}\approx abc.$$ 
Recalling \eqref{eq: chi Lp first} and \eqref{eq: ABC integral}, we have 
\begin{align}\label{eq: last bound}
    \|\chi_\mu\|_{L^p(G)}
\lesssim abc\cdot \sum_{s\in W}\mathcal{I}(s)^{1/p}.
\end{align}
Now let $\overline{\mu}$ and $\underline{\mu}$ denote the maximum and minimum of 
$\{|\langle \mu+\rho,\alpha\rangle| : \alpha \in \Phi^+\}$ respectively. Then 
$\overline{\mu}=a\approx b$, and $\underline{\mu}=c$. The result follows by applying Proposition \ref{prop: I} to the above bound \eqref{eq: last bound}. 
\end{proof}

\bibliographystyle{siam}
\bibliography{tex}

\end{document}